\newtheorem{algorithm}{Weak Galerkin Algorithm}
\newtheorem{remark}{Remark}[section]
\newcommand{\bq}{{\bf q}}
\newcommand{\bn}{{\bf n}}
\newcommand{\bx}{{\bf x}}
\def\T{{\mathcal T}}
\def\E{{\mathcal E}}
\def\l{{\langle}}
\def\r{{\rangle}}
\def\bn{{\bf n}}
\def\bq{{\bf q}}
\def\pa{\partial}
\def\he{{\widehat e}}
\def\hQ{{\widehat Q}}
\def\hw{{\widehat w}}
\def\hs{{\hat s}}
\def\O{\Omega}
\def\bbQ{\mathbb{Q}}
\newcommand{\pT}{{\partial T}}
\def\3bar{{|\hspace{-.02in}|\hspace{-.02in}|}}
\renewcommand{\ldots}{\dotsc}
\title{Curved Elements in Weak Galerkin Finite Element Methods}
\author{
Dan Li\thanks{ School of Mathematical Sciences,  Nanjing Normal University, Nanjing
210023, China (danlimath@163.com).} \and
Chunmei Wang \thanks{Department of Mathematics, University of Florida, Gainesville, FL 32611 (chunmei.wang@ufl.edu). The research of Chunmei Wang was partially supported by National Science Foundation Grants DMS-2136380 and DMS-2206332.}
\and
Junping Wang\thanks{Division of Mathematical Sciences, National Science Foundation, Alexandria, VA 22314(jwang@nsf.gov). The research of Junping Wang was supported by the NSF IR/D program, while working at National Science Foundation. However, any opinion, finding, and conclusions or recommendations expressed in this material are those of the author and do not necessarily reflect the views of the National Science Foundation.}}
\begin{document}

\maketitle

\begin{abstract}
A mathematical analysis is established for the weak Galerkin finite element methods for the Poisson equation with Dirichlet boundary value when the curved elements are involved on the interior edges of the finite element partition or/and on the boundary of the whole domain in two dimensions. The optimal orders of error estimates for the weak Galerkin approximations in both the $H^1$-norm and the $L^2$-norm are established. Numerical results are reported to demonstrate the performance of the weak Galerkin methods on general curved polygonal partitions.
\end{abstract}

\begin{keywords}
weak Galerkin, finite element methods, discrete weak gradient, Poisson equation, polygonal mesh, curved elements.
\end{keywords}

\begin{AMS}
Primary: 65N15, 65N30; Secondary: 35J50.
\end{AMS}
\pagestyle{myheadings}

\section{Introduction}\label{Section:Introduction}
We are concerned with the new developments of finite element methods for solving the Poisson equation by using the weak Galerkin (WG) finite element methods on the curved polygonal finite element partitions.

When the finite element methods are employed to solve the partial differential equation (PDE) problems, one of the steps is to partition the whole domain describing the original body or structure into finite elements (e.g., triangles, rectangles, etc.). The curved elements, a natural generalization of the polygonal elements, are applied for solving boundary value problems in the two-dimensional domain with an arbitrary/curved boundary. Although the engineers who conceive the matrix of finite element methods have used the curved finite elements for several decades, more research work needs to be done from the theoretical/mathematical point of view with regards to the error estimates of the numerical solution when the curved elements are concerned on the curved boundary of the domain and/or on the interior edges of the curved finite element partition. From the computational point of view, the curved elements make it possible to construct the finite-dimensional space for trial functions which is the subspace of the energy space of the boundary value problems in arbitrary/curved domain in two dimensions.

It is well-known that the numerical solutions of PDE problems with the curved boundaries by using the finite element methods may not be accurate \cite{b1961,b1971}. From a geometrical point of view, it is simple to replace the curved boundary by a polygon. The number of straight line segments can be increased until a desired geometrical accuracy is obtained. However, the geometrical accuracy may not always indicate the accuracy of the numerical approximation. Even if the piecewise polynomials of a higher degree are applied in the numerical scheme, the same accuracy may not be retained along the curved part of the original boundary as inside the domain or along the polygonal part of the boundary.  This behavior is known as the ``Babu$\breve{s}$ka Paradox''  in the literature \cite{b1961,b1971}. The numerical evidence was given in \cite{z1973} where the curved elements were proposed to be used along the curved part of the boundary.

The problem of accuracy of a finite element solution, near a curved boundary, has been investigated for several decades and some successful methods have been proposed to overcome it. The curved elements constructed in \cite{z1973} were closely associated with isoparametric elements, which were first introduced by Irons \cite{i1970} and were well-known in the technical literature \cite{sf}. The numerical results given in \cite{z1973} were very promising and suggested that using them could arrive at the same order of accuracy as in the case when the original boundary is a polygon and the triangular elements are applied \cite{z1968,bz}. \cite{z1973siam} proposed a finite element method which was applied for solving second order elliptic boundary value problems in domains with an arbitrary boundary, and the error bounds for a model problem were derived. Reader are referred to more references \cite{b1961,b1971,b1969,cr1972,mc1973,rr1968,sf,z1968,z1970,z1973,z1973siam,z1974siam}.

Weak Galerkin finite element method is a newly-developed numerical technique for PDEs where the differential operators in the variational formulation are reconstructed/approximated by using a framework that mimics the theory of distributions for piecewise polynomials. The usual regularity of the approximating functions is compensated by carefully-designed stabilizers. This WG method has been investigated for solving numerous model PDEs; see a limited list of references and references therein \cite{li-wang,mwy,mwy-biharmonic,ww2,ww4,ww6,ww7,ww8,ww9,ww12,ww13,wy,wy3655}. The research results indicate that the WG method has shown its great potential as a powerful numerical tool/technique in scientific computing. The fundamental difference between the WG methods and other existing finite element methods is the use of weak derivatives and weak continuities in the design of numerical schemes based on conventional weak forms for the underlying PDE problems. Due to its great structural flexibility, WG methods are well suited to a wide class of PDEs by providing the needed stability and accuracy in approximations. A recent development of WG, named ``Primal-Dual Weak Galerkin (PD-WG)'' has been proposed for problems for which the usual numerical methods are difficult to apply \cite{
 lwwhyper,cwwdivcurl,wcauchy,wmpdwg,wzcondif,cwcondif,wwfp,wwtrans,wcauchy2,wwcauchy,wwnondiv,lwspdwg,cwwinterface, cwwlp}. The essential idea of PD-WG is to interpret the numerical solutions as a constrained minimization of some functionals with constraints that mimic the weak formulation of the PDEs by using weak derivatives. The resulting Euler-Lagrange equation offers a symmetric scheme involving both the primal variable and the dual variable (Lagrange multiplier).

In the WG framework, the weak functions for second order elliptic equations possess the form of $v=\{v_0,v_b\}$ with $v=v_0$ representing the value of $v$ in the interior of each element and $v=v_b$ for the information of $v$ on the boundary of the element. Both $v_0$ and $v_b$ are approximated by polynomials of suitably-chosen degrees in the numerical approximation. To our best knowledge, all the existing results on WG were developed for finite element partitions with flat/straight sides. As most of the application problems involve physical domains with non-flat interfaces or boundaries, there is a great need of study for the WG method on curved elements.

For simplicity, we shall demonstrate the WG method on curved elements by using the Poisson equation with Dirichlet boundary condition. The model problem then seeks an unknown function $u\in H^1(\Omega)$ satisfying
\begin{eqnarray}
-\Delta u&=&f,\quad \mbox{in}\;\Omega,\label{pde}\\
        u&=&g,\quad\mbox{on}\;\partial\Omega,\label{bc}
\end{eqnarray}
where $\Omega$ is an open bounded domain in $\mathbb{R}^2$ with piecewise smooth and curved boundary $\partial\Omega$, and $\Delta=\nabla\cdot\nabla$ is the Laplacian operator with $\nabla u$ being the usual gradient operator.

The weak formulation of the second order elliptic model problem is as follows: Find $u\in H^1(\Omega)$ satisfying $u=g$ on $\partial\Omega$, such that
\begin{equation}\label{weakform}
(\nabla u,\nabla v)=(f,v), \qquad\forall v\in V,
\end{equation}
where $V=\{v\in H^1(\Omega), v=0\ \text{on}\ \partial\Omega\}$.

In this paper, the curved edges are assumed to appear on the interior interfaces of the partition and/or on the boundary of the whole domain in the analysis of the $H^1$-norm error estimate for the WG solution. For the simplicity of analysis, when it comes to the $L^2$-norm error estimate for the WG approximation, the curved edges are assumed to appear only on the boundary of the whole domain while the interior edges of the finite element partition are assumed to be straight line segments.
 %Since there is no need to use the curved elements for the lowest order WG element ($k=1$), we assume that $k\ge2$ throughout the paper.

The paper is organized as follows. In Section \ref{Section:weak-gradient}, we shall review the definition of the weak gradient operator and its discrete analogue. In Section \ref{Section:CurvedElement}, we describe some properties for curved finite element partitions. In Section \ref{Section:wg-fem}, we shall state a weak Galerkin finite element scheme. Section \ref{Section:EU} is devoted to a discussion of the solution existence and uniqueness for the discrete system. In Section \ref{Section:ErrorEquation}, an error equation is derived. In Section
\ref{Section:Estimates}, we present some technical estimates for the usual $L^2$ projection operators. In Section
\ref{Section:H1ErrorEstimate}, we derive some optimal order error estimates for the WG approximations in both $H^1$ and $L^2$ norms. Section \ref{Section:NC} provides a new technique for calculating the integrations on curved polygons. Finally in Section \ref{Section:NE}, we conduct some numerical experiments for verifying the developed theories.

\section{Weak Gradient and Discrete Weak Gradient}\label{Section:weak-gradient}

The gradient operator is the differential operator used in the weak formulation (\ref{weakform}) of the second order model equation (\ref{pde})-(\ref{bc}). This section will review the weak gradient operator as well as its discrete version \cite{wy,wy3655}.

Let $T$ be a bounded domain with Lipschitz continuous boundary $\partial T$. By a {\em weak function} on $T$ we mean a function bundled with two or more components; each component represents a specific aspect of the function. In the interest of the gradient operator, we consider the weak function $v=\{v_0,v_b\}$ with two components $v_0\in L^2(T)$ and $v_b\in L^2(\partial T)$. The first component $v_0$ represents the value of $v$ in the interior of $T$, and the second one $v_b$ carries the value of $v$ on the boundary $\partial T$. Note that $v_b$ in general is not the trace of $v_0$ on $\partial T$, though taking the trace of $v_0$ on $\partial T$ is a viable option for $v_b$. Denote by $W(T)$ the space of all weak functions on $T$; i.e.,
$$
W(T)=\{v=\{v_0,v_b\}:v_0\in L^2(T),v_b\in L^2(\partial T)\}.
$$

For any $v\in W(T)$, the {\em weak gradient} of $v$ is defined as a bounded linear functional $\nabla_wv$ on $[H^1(T)]^2$ so that its action on each $\bq\in [H^1(T)]^2$ is given by
\begin{equation}\label{weak-gradient}
\langle\nabla_wv,\bq\rangle_T=-(v_0,\nabla\cdot\bq)_T+\langle v_b,\bq\cdot\bn\rangle_{\partial T},
\end{equation}
where $\bn$ is the outward normal direction on $\partial T$, $(v_0,\nabla\cdot\bq)_T=\int_Tv_0(\nabla\cdot\bq)dT$ is the inner product of $v_0$ and $\nabla\cdot\bq$ in $L^2(T)$, and $\langle v_b,\bq\cdot\bn\rangle_{\partial T}$ is the inner product of $v_b$ and $\bq\cdot\bn$ in $L^2(\partial T)$.

Denote by $P_r(T)$ the space of all polynomials on $T$ with total degree no more than $r$. A {\em discrete weak gradient} for $v=\{v_0,v_b\}$, denoted by $\nabla_{w,r}v$, is defined as an approximation of $\nabla_wv$ in the vector polynomial space $[P_r(T)]^2$ satisfying
\begin{equation}\label{dwd}
(\nabla_{w,r}v,\bq)_T=-(v_0,\nabla\cdot\bq)_T+\langle v_b,\bq\cdot\bn\rangle_{\partial T},\qquad \forall\bq\in[P_r(T)]^2.
\end{equation}

Assume the first component $v_0$ of $v=\{v_0,v_b\}$ is sufficiently regular such that $v_0\in H^1(T)$. Applying the usual integration by parts to the first term on the right hand side of (\ref{dwd}), we arrive at
\begin{equation}\label{dwd-2}
(\nabla_{w,r}v,\bq)_T=(\nabla v_0,\bq)_T+\langle v_b-v_0,\bq\cdot\bn\rangle_{\partial T},\qquad \forall\bq\in [P_r(T)]^2.
\end{equation}

\begin{remark}
In practical computation/implementation, the integrals over $T$ and $\partial T$ must be computed by using some numerical integration formulas. We assume these integrals are evaluated exactly.
\end{remark}

\section{Finite Elements with Curved Edges}\label{Section:CurvedElement}

A polygon with curved edges (PCE) is a bounded connected polygonal region in ${\mathbb R}^2$ bounded by a finite number of curved or straight edges. A curved polygonal partition of the domain $\Omega\subset{\mathbb R}^2$, denoted by $\T_h$, is defined as a family of PCEs, denoted by $\{T_j,j=1,2, \ldots\}$, satisfying two properties: (1) $\bigcup_{j=1,2,\ldots}T_j=\Omega$; and (2) for any $i,j (i\neq j)$, $T_i\cap T_j$ is either empty, or a common edge, or the vertices of $T_i$ and $T_j$. Each partition cell $T_j\in\T_h (j=1,2,\ldots)$ is called a curved element. A curved polygonal partition with a finite number of curved elements is called a curved finite element partition of the domain $\Omega$.

Let $\T_h=\{T_j\}_{j=1,\ldots,N}$ be a curved finite element partition of the domain $\Omega$. Denote by $h_T$ the diameter of the element $T$, and $h=\max_{T}h_T$ the meshsize of the partition $\T_h$. Denote by $|T|$ the area of the element $T\in\T_h$. Denote by ${\cal E}_h$ the set of all edges in ${\cal T}_h$ such that each edge $e\in\E_h$ is either on the boundary of $\Omega$ or shared by two distinct elements. Denote by $\E^0_h=\E_h\setminus{\partial\Omega}$ the set of all interior edges; i.e., for each edge $e\in\E_h^0$, there are two elements $T_j$ and $T_i$ ($i \neq j$) such that $e\subset T_j\cap T_i$.  Denote by $|e|$ or $h_e$ the length of the edge $e\in\E_h$. Assume that each element $T\in\T_h$ is a closed and simply connected polygon (see Fig. \ref{fig:shape-regular-element}).

\begin{figure}[h!]
\begin{center}\textbf{}
\begin{tikzpicture}[rotate=10]
\coordinate (A) at (-2,-1); \coordinate (B) at (2.5,-1.5); \coordinate (C) at (3.5, 0.5); \coordinate (D) at (2,1);
\coordinate (E) at (2.2, 3.3); \coordinate (F) at (-1, 3); \coordinate (CC) at (0,0);  \coordinate (Ae) at (1.2,1);
\coordinate (xe) at (-1.8625,1.1); \coordinate (AFc) at (-1.75, 1.5); \coordinate (AFcLeft) at (-2.2, 1.75);
\draw node[left] at (xe) {$\bx_e$}; \draw node[right] at (Ae) {$A_e$}; \draw node[left] at (A) {B}; \draw node[below] at (B) {C};
\draw node[right] at (C) {D}; \draw node[below] at (D) {E}; \draw node[above] at (E) {F}; \draw node[above] at (F) {A};
\draw node[left] at (AFcLeft) {$\bn$}; \draw (A)--(B)--(C)--(D)--(E)--(F);
\draw[dashed](A)--(Ae); \draw[dashed](F)--(Ae); \draw[->] (Ae)--(xe); \draw[->] (AFc)--(AFcLeft);
\filldraw[black] (A) circle(0.05); \filldraw[black] (F) circle(0.05); \filldraw[black] (Ae) circle(0.05); \filldraw[black] (xe) circle(0.035);
\draw (A) to [out=115, in=250] (F);
\end{tikzpicture}
\caption{Depiction of a shape-regular polygonal element $ABCDEFA$.}
\label{fig:shape-regular-element}
\end{center}
\end{figure}
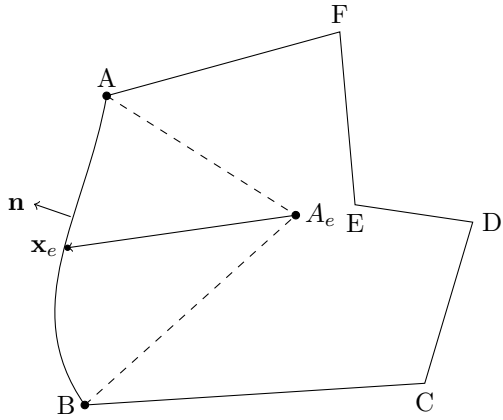
\medskip
\medskip

The curved finite element partition ${\cal T}_h$ is said to be shape regular if the conditions (A1)-(A4) are satisfied \cite{wy,mwy}.

\medskip
\begin{description}
\item[A1:] For each element $T\in\T_h$, there exists a positive constant $\varrho_v$ such that
\begin{equation*}\label{a1}
\varrho_v h_T^2\leq|T|.
\end{equation*}

\item[A2:] For each element $T\in\T_h$, there exist positive constants $\kappa$ and $\kappa^*$ such that
\begin{equation*}\label{a2}
\kappa h_T\leq h_e\leq\kappa^*h_T,
\end{equation*}
for each edge $e\subset\partial T$.

\item[A3:] For each element $T\in\T_h$ and each edge $e\subset\partial T$, there exists a ``pyramid'' $P(e,T,A_e)$ contained in $T$ such that its curved base is identical with $e$, its apex is $A_e\in T$, and its height is proportional to $h_T$ with a proportionality constant $\sigma_e$ bounded by a fixed positive number $\sigma^*$ from below. In other words, the height of the ``pyramid'' is given by $\sigma_eh_T$ such that $\sigma_e\ge\sigma^*>0$. The ``pyramid'' is also assumed to stand up above the curved base $e$ in the sense that the angle between the vector $\overrightarrow{A_e x_e}$, for any $x_e\in e$, and the outward normal direction of $e$ (i.e., the vector $\bn$ in Fig. \ref{fig:shape-regular-element}) is strictly acute by falling into an interval $[0,\theta_0]$ with $\theta_0<\frac{\pi}{2}$.

\item[A4:] For each element $T\in\T_h$, there is a simplex $S(T)$ circumscribed in $T$ that is shape regular and the diameter of $S(T)$, denoted by $h_{S(T)}$, is proportional to the diameter of $T$; i.e., $h_{S(T)}\leq\gamma_*h_T$ with a constant $\gamma_*$ independent of $T$. Furthermore, assume that each circumscribed simplex $S(T)$ intersects with only a fixed and small number of such simplices for all other elements $T\in\T_h$.
\end{description}

For the curved finite element partition $\T_h$, we assume that each curved edge can be straightened through a local mapping that is sufficiently smooth. More precisely, for each curved edge $e\subset\partial T,T\in\T_h$, assume that there exists a parametric representation
\[(x,y)=(\phi(\hat{s}),\psi(\hat{s})),\qquad\hat{s}\in\he=[0,h_e],\]
where $\phi=\phi(\hat{s})\in C^n$ and $\psi=\psi(\hat{s})\in C^n$ for some $n\ge1$, and at least one of the derivatives $\phi'(\hat{s})$ and $\psi'(\hat{s})$ is different from zero for $\hat{s}\in\he$. Assume that the mapping ${F}_e:=(\phi,\psi)$ from $\he$ to $e$ is globally invertible on the ``reference'' edge $\he$, and both ${F}_e$ and its inverse mapping $\widehat{F}_e:={F}_e^{-1}$ can be extended to the ``pyramid'' $P(e,T,A_e)$ as ${F}$ and $\widehat{F}:={F}^{-1}$; see Fig. \ref{fig:curved-element} for an illustration. We further assume that there exists a constant $C$ such that
\begin{equation}\label{EQ:mappingF}
\left|\frac{d^\alpha F}{d\hs^\alpha}\right|\leq C,
\end{equation}
for all $|\alpha|\le n$.

Let $e$ be a curved edge of the element $T\in\T_h$ with a parametric representation given by
\[\bx={F}_e(\hat{s}),\qquad\hat{s}\in[0,h_e],\]
where $\bx=(x,y)\in e$ and ${F}_e(\hat{s})=(\phi(\hat{s}),\psi(\hat{s}))$. With the mapping ${F}_e$ and its inverse $\widehat{F}_e:=F_e^{-1}$, any function $\hw\in L^2(\he)$ can be transformed into a function $w\in L^2(e)$ as follows
\begin{equation}\label{EQ:March08.001}
w(\bx):=\hw(\widehat{F}_e(\bx)),\qquad\bx\in e.
\end{equation}
Likewise, any function $w\in L^2(e)$ can be transformed into a function in $L^2(\he)$ by
\begin{equation}\label{EQ:March08.002}
\hw(\hat{s}):=w({F}_e(\hat{s})),\qquad\hat{s}\in\he.
\end{equation}
The relations (\ref{EQ:March08.001}) and (\ref{EQ:March08.002}) are
written respectively as
$$
w=\hw\circ\widehat{F}_e,\quad\hw=w\circ{F}_e.
$$

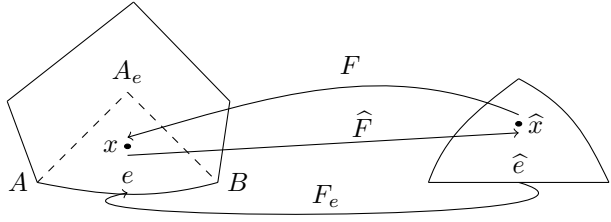
\begin{figure}
\begin{center}
\begin{tikzpicture}[xscale=0.8,yscale=0.6, rotate=0]
 \path (0,0) coordinate (A1); \path (3,0) coordinate (A2); \path (1.5,2) coordinate (A3); \path (-0.5, 1.8) coordinate (AC);
 \path (3.2, 1.8) coordinate (BC); \path (1.6, 4.0) coordinate (AD); \draw (A1) to  (AC); \draw (AC) to (AD); \draw (AD) to (BC);
 \draw (BC) to (A2); \draw (A1) to [out=-15, in=200] (A2); \draw[dashed] (A3) to  (A2); \draw[dashed] (A1) to  (A3);
 \path (1.5,-0.23) coordinate (Ecenter); \path (8,0.0) coordinate (Ehatcenter); \path (6.5,0) coordinate (A1n); \path (9.5,0) coordinate (A2n);
 \path (8,2.3) coordinate (A3n); \draw (A1n) to (A2n); \draw (A3n) to [out=-35, in=110] (A2n); \draw (A1n) to [out=75, in=-135] (A3n);
 \draw node[left] at (A1) {$A$}; \draw node[right] at (A2) {$B$}; \draw node[above] at (A3) {$A_e$}; \path (1.5,0.8) coordinate (LeftCenter);
 \path (2,1.6) coordinate (HatTC); \path (1.5,1.0) coordinate (LeftCenterUP); \path (1.5,0.6) coordinate (LeftCenterDown);
 \path (8,1.5) coordinate (RightCenterUP); \path (8,1.1) coordinate (RightCenterDown); \path (8,1.3) coordinate (RightCenter);
 \path (8.2,1.6) coordinate (TC);
 \filldraw[black] (LeftCenter) circle(0.05); \filldraw[black] (RightCenter) circle(0.05);
 \draw node[left] at (LeftCenter) {$x$}; \draw node[right] at (RightCenter) {$\widehat{x}$};
 \draw[<-] (LeftCenterUP) to [out=21, in=150] (RightCenterUP); \draw[<-] (RightCenterDown) to (LeftCenterDown);
 \draw[<-] (Ecenter) to [out=-165, in=-21] (Ehatcenter);
 \path (5.2,2.6) coordinate (CenterUp); \path (5.4,1.3) coordinate (CenterDown); \path (4.8,-0.3) coordinate (CenterDowne);
 \draw node at (CenterDown) {$\widehat{F}$}; \draw node at (CenterUp) {${F}$}; \draw node at (CenterDowne) {${F}_e$};
 \draw node[above] at (Ehatcenter) {$\widehat{e}$}; \draw node[above] at (Ecenter) {${e}$};
 \path (4,0) coordinate (A1n); \path (7,0) coordinate (A2n); \path (5.5,1.8) coordinate (A3n); \path (4.75,1.05) coordinate (A13n);
 \path (6.2,1.3) coordinate (A23n);
\end{tikzpicture}
\caption{Depiction of locally smooth mappings $F_e$ that straighten
curved edges for curved elements.} \label{fig:curved-element}
\end{center}
\end{figure}

\medskip

\section{Weak Galerkin Finite Element Schemes}\label{Section:wg-fem}

For any integer $\ell\ge0$, denote by ${P}_{\ell}(\he)$ the space of polynomials of degree $\ell$ on the straight reference edge $\he$. With the mapping $\widehat{F}_e:=F_e^{-1}$, the space of polynomials ${P}_{\ell}(\he)$ can be transformed into $V_b(e,\ell)={P}_{\ell}(\he)\circ\widehat{F}_e$ as follows:
$$
V_b(e,\ell)=\{\phi=\widehat{\phi}\circ\widehat{F}_e:\ \ \widehat{\phi}\in{P}_{\ell}(\he)\}.
$$
If the edge $e$ is a straight line segment, the mapping $F_e$ is required to be affine. Consequently, its inverse mapping $\widehat{F}_e$ is also affine so that $V_b(e,\ell)=P_{\ell}(e)$ is the usual space of polynomials of degree $\ell$ on $e$.

Let $k\ge 1$ be a given integer. On each element $T\in\T_h$, we define a local finite element space as
$$
W(k,T)=\{v=\{v_0,v_b\}:v_0\in P_k(T),v_b|_e\in V_b(e,k-1),\ \ \forall e\subset\partial T\}.
$$
By patching all the local finite element spaces $W(k,T)$ together with a common value $v_b$ on each interior edge in $\E_h^0$, we obtain a global finite element space, denoted by $W_h$; i.e.,
\begin{equation}\label{EQ:global-WFES}
W_h=\{v=\{v_0,v_b\}:\;v|_T\in W(k,T),\ v_b|_{\partial T_i\cap e}=v_b|_{\partial T_j\cap e},\ T\in\T_h,e\in\E_h^0\},
\end{equation}
where $v_b|_{\partial T_s\cap e}$ is the value of $v_b$ on the edge $e$ as seen from the element $T_s,\ s=i,j$. Denoted by $W^0_h$ a subspace of $W_h$ with vanishing value on $\partial\Omega$; i.e.,
\begin{equation}\label{vh0space}
W^0_h=\{v:\ v\in W_h,\ v_b=0~\mbox{on}~\partial\Omega\}.
\end{equation}

For each element $T\in\T_h$, denote by $Q_0$ the $L^2$ projection from $L^2(T)$ to $P_k(T)$. Denote by $\hQ_b$ the weighted $L^2$ projection from $L^2(\he)$ to ${P}_{k-1}(\he)$ with the corresponding Jacobian as the weight function. For each edge $e$, we define a projection operator $Q_b:L^2(e)\rightarrow V_b(e,k-1)$ as follows
$$
Q_bw\circ F_e:=\hQ_b(w\circ F_e),\qquad w\in L^2(e).
$$
Note that for the straight edge $e$, the operator $Q_b$ is easily seen to be the standard $L^2$ projection from $L^2(e)$ to $P_{k-1}(e)$. $Q_0$ and $Q_b$ collectively define a projection operator onto the weak finite element space $W_h$, denoted by
$$
Q_h=\{Q_0,Q_b\}.
$$
Denote by $\bbQ_h$ the $L^2$ projection from $[L^2(T)]^2$ onto $[P_{k-1}(T)]^2$.

For all $v,w\in W_h$, we introduce two bilinear forms as follows:
\begin{eqnarray*}
s(v,w)&=&\rho\sum_{T\in{\cal T}_h}h_T^{-1}\langle Q_bv_0-v_b,Q_bw_0-w_b\rangle_{\partial T},\\
a(v,w)&=&\sum_{T\in{\cal T}_h}(\nabla_wv,\nabla_ww)_T+s(v,w),
\end{eqnarray*}
where $\rho$ is any positive number of unit size. For simplicity, we shall take $\rho=1$ throughout the paper.

\begin{algorithm}
Find $u_h=\{u_0,u_b\}\in W_h$ satisfying $u_b=Q_bg$ on $\partial\Omega$, such that
\begin{equation}\label{wg}
a(u_h,v)=(f,v_0),\quad \forall v=\{v_0,v_b\}\in W_h^0.
\end{equation}
\end{algorithm}

%{\color{blue}HERE: Not satisfied with the use of $Q_b$ in the WG algorithm because this needs extra computation. Can we replace it with some nodal point interpolation?}

\section{Existence and Uniqueness}\label{Section:EU}

The goal of this section is to examine the well-posedness of the weak Galerkin finite element scheme (\ref{wg}). Note that the bilinear form $a(\cdot,\cdot)$ is symmetric and non-negative in the space $W_h\times W_h$. Letting
\begin{equation}\label{3barnorm}
\3barv\3bar^2=a(v,v),
\end{equation}
we see that the functional $\3bar\cdot\3bar$ defines a semi-norm in $W_h$. Furthermore, the following result holds true.

\begin{lemma}\label{LEMMA:triplebarnorm}
The functional $\3bar\cdot\3bar$ given by (\ref{3barnorm}) defines a norm in the subspace $W_h^0$, provided that the meshsize $h$ is sufficiently small.
\end{lemma}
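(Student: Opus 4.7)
The plan is first to observe that $\3bar\cdot\3bar$ is evidently a semi-norm on $W_h^0$: homogeneity and the triangle inequality follow from the symmetry and bilinearity of $a(\cdot,\cdot)$, and
$a(v,v)=\sum_{T\in\T_h}\|\nabla_w v\|_T^2+s(v,v)\ge 0$. So the only missing property is positive definiteness. Suppose $\3barv\3bar=0$ for some $v=\{v_0,v_b\}\in W_h^0$. Then $\nabla_w v|_T=0$ on every $T\in\T_h$, and $s(v,v)=0$ yields $Q_b v_0=v_b$ on every $e\in\E_h$ (so in particular $Q_b v_0=0$ on $\partial\Omega$).

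Next I would test the discrete weak-gradient identity (\ref{dwd-2}) with $\bq=\nabla v_0\in[P_{k-1}(T)]^2$ to get
$$
0=(\nabla_w v,\nabla v_0)_T=\|\nabla v_0\|_T^2+\langle v_b-v_0,\nabla v_0\cdot\bn\rangle_{\partial T},
$$
so that $\|\nabla v_0\|_T^2=\langle v_0-Q_b v_0,\nabla v_0\cdot\bn\rangle_{\partial T}$. On each straight edge $e\subset\partial T$ we have $V_b(e,k-1)=P_{k-1}(e)\ni\nabla v_0\cdot\bn$, and the defining $L^2$-orthogonality of $Q_b$ kills the contribution. On a curved edge, $\nabla v_0\cdot\bn$ is not in $V_b(e,k-1)$ in general, but I would still exploit the orthogonality $\langle v_0-Q_b v_0,\phi\rangle_e=0$ for every $\phi\in V_b(e,k-1)$ by subtracting a polynomial approximation of $\nabla v_0\cdot\bn$ in $V_b(e,k-1)$, leaving only an error term that should scale with a positive power of $h$.

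The quantitative core is then to establish, for each curved edge $e\subset\partial T$, a bound of the form
$$
\bigl|\langle v_0-Q_b v_0,\nabla v_0\cdot\bn\rangle_e\bigr|\le C\,h_T^{1+\delta}\,\|\nabla v_0\|_T^2
$$
for some $\delta>0$. The way to get this is to transform to $\he$ via $F_e$: under (\ref{EQ:mappingF}), the composition $(\nabla v_0\cdot\bn)\circ F_e$ is smooth with derivatives controlled by inverse estimates on the polynomial $\nabla v_0$, so best polynomial approximation of degree $k-1$ on the one-dimensional reference edge $\he$ supplies the needed approximation factor; combining this with the trace/approximation bound $\|v_0-Q_b v_0\|_e\le C\,h_T^{1/2}\|\nabla v_0\|_T$ (which uses the shape-regularity assumptions (A1)-(A4) and the $\hQ_b$ characterization of $Q_b$) yields the claim. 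Summing over $T$ and $e$ and using the finite overlap in (A4) gives $\sum_T\|\nabla v_0\|_T^2\le C h^{\delta}\sum_T\|\nabla v_0\|_T^2$, and for $h$ small enough this forces $\nabla v_0\equiv 0$. Thus $v_0$ is constant on each element; since $v_b=Q_b v_0$ is the same constant on every edge of $T$, the single-valuedness of $v_b$ across interior edges, the connectedness of $\Omega$, and $v_b=0$ on $\partial\Omega$ together imply $v_0\equiv 0$ and $v_b\equiv 0$.

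The main obstacle I anticipate is precisely the curved-edge residual $\langle v_0-Q_b v_0,\nabla v_0\cdot\bn\rangle_e$: in the standard flat-edge WG analysis the analogous term vanishes exactly by the $Q_b$-orthogonality, while here one must extract a genuine positive power $h_T^\delta$ of smallness in a way that is uniform over all curved edges of the partition. That uniformity is what ultimately rests on (\ref{EQ:mappingF}) and (A1)-(A4), and is the delicate bookkeeping step of the argument.
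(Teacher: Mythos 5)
Your argument is correct and follows essentially the same route as the paper: after reducing to $\|\nabla v_0\|_T^2=\langle v_0-Q_bv_0,\nabla v_0\cdot\bn\rangle_{\partial T}$ via the vanishing of $\nabla_w v$ and of the stabilizer, the curved-edge residual you isolate is exactly the content of the paper's Lemma \ref{Lemma:Wonderful}, estimate (\ref{EQ:July06-2016:800}), which supplies the factor $Ch_e$ (the first power of $h$ already suffices, so no extra $\delta>0$ is needed). The only cosmetic differences are that the paper tests (\ref{dwd-2}) with a general $\bq\in[P_{k-1}(T)]^2$ rather than $\bq=\nabla v_0$ directly, and that your final step spells out the role of connectedness and single-valuedness of $v_b$ a bit more explicitly than the paper does.
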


\begin{proof}
It suffices to check the positivity property for $\3bar\cdot\3bar$. Assume that $\3bar v\3bar=0$ for $v\in W_h^0$. It follows that
\[(\nabla_wv,\nabla_wv)+\sum_{T\in\T_h}h_T^{-1}\langle Q_bv_0-v_b,Q_bv_0-v_b\rangle_\pT=0,\]
which implies that $\nabla_wv=0$ on each element $T$ and $Q_bv_0=v_b$ on each $\pT$. It follows from $\nabla_wv=0$ and (\ref{dwd-2}) that for any $\bq\in[P_{k-1}(T)]^2$,
\begin{eqnarray*}
0&=&(\nabla_wv,\bq)_T\\
 &=&(\nabla v_0,\bq)_T+\langle v_b-v_0,\bq\cdot\bn\rangle_\pT\\
 &=&(\nabla v_0,\bq)_T+\langle Q_bv_0-v_0,\bq\cdot\bn\rangle_\pT,
\end{eqnarray*}
which leads to
\begin{equation}\label{EQ:July05:000}
(\nabla v_0,q)_T=\langle v_0-Q_bv_0,\bq\cdot\bn\rangle_\pT.
\end{equation}

On each straight edge $e\subset\pT$, since $Q_b$ is the usual $L^2$ projection onto the space $P_{k-1}(e)$ and $\bq\cdot\bn|_e\in P_{k-1}(e)$, then
\begin{equation}\label{EQ:July06-2016:001new}\nonumber
\langle v_0-Q_bv_0,\bq\cdot\bn\rangle_e=0.
\end{equation}
If $e\subset\pT$ is a curved edge, then the above identity is generally not valid. However, Lemma \ref{Lemma:Wonderful} can be used to show that there exists a constant $C$ such that
\begin{equation}\label{EQ:July05:005}
|\langle v_0-Q_bv_0,\bq\cdot\bn\rangle_e|\le Ch_e\|\nabla v_0\|_T \ \|\bq\|_{T}.
\end{equation}
By combining (\ref{EQ:July05:000}) with (\ref{EQ:July05:005}) we obtain
$$
|(\nabla v_0,\bq)_T|\leq Ch_e\|\nabla v_0\|_T \ \|\bq\|_{T},
$$
for all $\bq\in[P_{k-1}(T)]^2$. It follows that
$$
\|\nabla v_0\|_T\leq Ch_e\|\nabla v_0\|_T,
$$
which shows that $\nabla v_0=0$ for sufficiently small meshsize $h$. Thus, $v_0$ is a constant on each $T\in\T_h$ and hence $Q_b v_0$ is a constant on each $\partial T$. Using the fact that $Q_bv_0=v_b$ and $v_b=0$ on $\partial\Omega$, we have $v_0=0$ and $v_b=0$. This completes the proof of the lemma.
\end{proof}

\medskip
\begin{theorem}\label{THM:ExistenceUniqueness}
Assume that the curved finite element partition $\T_h$ is shape-regular with sufficiently small meshsize $h$. The weak Galerkin finite element scheme (\ref{wg}) has one and only one solution.
\end{theorem}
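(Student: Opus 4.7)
The plan is to reduce existence and uniqueness for the square linear system (\ref{wg}) to uniqueness of the homogeneous problem, and then invoke Lemma \ref{LEMMA:triplebarnorm} to conclude. Since $W_h$ is finite-dimensional, and the trial space (after fixing $u_b = Q_b g$ on $\partial\Omega$) is an affine subspace parallel to $W_h^0$, the number of equations equals the number of unknowns, so I only need to show that the associated homogeneous system admits only the trivial solution.

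First, I would pick any $\tilde u_h \in W_h$ with $\tilde u_b = Q_b g$ on $\partial\Omega$ (such a lift exists since the boundary degrees of freedom of $W_h$ are independent). Writing $u_h = w_h + \tilde u_h$, the scheme is equivalent to finding $w_h \in W_h^0$ such that
\[
a(w_h, v) = (f, v_0) - a(\tilde u_h, v), \qquad \forall v \in W_h^0,
\]
a square linear system on the finite-dimensional space $W_h^0$. Existence and uniqueness reduce to showing that the corresponding homogeneous problem has only the trivial solution.

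Next, assume $w_h \in W_h^0$ satisfies $a(w_h, v) = 0$ for all $v \in W_h^0$. Taking $v = w_h$ gives $a(w_h, w_h) = \3bar w_h \3bar^2 = 0$. Since the partition $\T_h$ is assumed shape-regular with sufficiently small meshsize, Lemma \ref{LEMMA:triplebarnorm} applies and $\3bar\cdot\3bar$ is a bona fide norm on $W_h^0$. Therefore $w_h = 0$, which proves uniqueness.

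The main obstacle has already been dispatched in Lemma \ref{LEMMA:triplebarnorm}, namely proving that the stabilized bilinear form controls the weak functions in spite of the curved edges (through the bound (\ref{EQ:July05:005}) which absorbs the non-orthogonality $\langle v_0 - Q_b v_0, \bq\cdot\bn\rangle_e$ into the $L^2$ norm of $\nabla v_0$ with a factor $h_e$). Once that is available, the proof of the theorem itself is essentially a one-line linear algebra argument: a square linear system has a solution for every right-hand side if and only if it is injective, and injectivity is exactly what the triple-bar norm property delivers. The only thing to be careful about is to emphasize explicitly that the smallness of $h$ required by Lemma \ref{LEMMA:triplebarnorm} is inherited as the hypothesis of the theorem.
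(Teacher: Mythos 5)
Your proposal is correct and follows essentially the same route as the paper: reduce existence to uniqueness via finite-dimensional linear algebra, test the homogeneous equation with the solution itself, and invoke Lemma \ref{LEMMA:triplebarnorm} (which requires the shape-regularity and small-$h$ hypotheses) to conclude the triple-bar norm vanishing forces the trivial solution. The only cosmetic difference is that you make the lifting to a square system on $W_h^0$ explicit, while the paper simply asserts that uniqueness suffices.
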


\smallskip

\begin{proof}
It suffices to prove the uniqueness. Assume that $u_h^{(1)}$ and $u_h^{(2)}$ are two different solutions of (\ref{wg}), then $\epsilon_h=u_h^{(1)}-u_h^{(2)}$ would satisfy
$$
a(\epsilon_h,v)=0,\qquad\forall v\in W_h^0.
$$
Note that $\epsilon_h\in W_h^0$. Letting $v=\epsilon_h$ in the above equation gives
$$
\3bar\epsilon_h\3bar^2=a(\epsilon_h,\epsilon_h)=0.
$$
It follows that $\epsilon_h\equiv 0$, or equivalently, $u_h^{(1)}\equiv u_h^{(2)}$. This completes the proof of the theorem.
\end{proof}
\medskip

\section{Error Equation}\label{Section:ErrorEquation}

We start this section by deriving a useful result for the discrete weak gradient operator.

\smallskip

\begin{lemma}
Let $Q_h$ and $\bbQ_h$ be the $L^2$ projection operators defined in the previous sections. On each element $T\in\T_h$, we have that for any $\phi\in H^1(T)$,
\begin{equation}\label{key}
(\nabla_wQ_h\phi,\tau)_T=(\nabla\phi,\tau)_T+\l Q_b\phi-\phi,\tau\cdot\bn\r_\pT ,\quad\forall \tau\in[P_{k-1}(T)]^2.
\end{equation}
Note that $\langle Q_b\phi-\phi,\tau\cdot\bn\rangle_{\partial T}\neq0$ when there is at least one curved segment on $\pT$ .
\end{lemma}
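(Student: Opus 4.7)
The plan is to prove the identity by starting from the definition of the discrete weak gradient, using the defining property of the $L^2$ projection $Q_0$ to remove it from the volume term, and then applying standard integration by parts on $T$. The reason the identity is nontrivial is that the boundary term $\langle Q_b\phi - \phi,\tau\cdot\bn\rangle_{\partial T}$ does \emph{not} vanish when $\partial T$ contains a curved edge, since $\tau\cdot\bn$ restricted to a curved $e$ is generally not in the span of $V_b(e,k-1)$ (which is a space of polynomials on $\widehat e$, not on $e$).

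First I would apply the definition (\ref{dwd}) of $\nabla_{w,k-1}$ with $v = Q_h\phi = \{Q_0\phi, Q_b\phi\}$ and test function $\tau \in [P_{k-1}(T)]^2$:
\begin{equation*}
(\nabla_w Q_h\phi,\tau)_T = -(Q_0\phi,\nabla\cdot\tau)_T + \langle Q_b\phi,\tau\cdot\bn\rangle_{\partial T}.
\end{equation*}
Next, since $\tau \in [P_{k-1}(T)]^2$ gives $\nabla\cdot\tau \in P_{k-2}(T) \subset P_k(T)$, the defining property of the $L^2$ projection $Q_0:L^2(T)\to P_k(T)$ yields $(Q_0\phi,\nabla\cdot\tau)_T = (\phi,\nabla\cdot\tau)_T$. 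Then applying the usual integration by parts on $T$ (which is valid for $\phi\in H^1(T)$ and the smooth polynomial $\tau$) gives
\begin{equation*}
-(\phi,\nabla\cdot\tau)_T = (\nabla\phi,\tau)_T - \langle \phi,\tau\cdot\bn\rangle_{\partial T}.
\end{equation*}
Combining these two identities produces exactly
\begin{equation*}
(\nabla_w Q_h\phi,\tau)_T = (\nabla\phi,\tau)_T + \langle Q_b\phi - \phi,\tau\cdot\bn\rangle_{\partial T},
\end{equation*}
which is (\ref{key}).

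There is no real obstacle: the only point worth flagging is the remark after (\ref{key}) explaining why the boundary term persists. On a straight edge $e\subset\partial T$ the mapping $F_e$ is affine, so $V_b(e,k-1) = P_{k-1}(e)$ and $\tau\cdot\bn|_e\in P_{k-1}(e)$; the defining property of $Q_b$ as the standard $L^2$ projection then forces $\langle Q_b\phi-\phi,\tau\cdot\bn\rangle_e = 0$. On a curved edge, however, $Q_b$ is defined through the pullback $\widehat Q_b$ on $\widehat e$ with the Jacobian of $F_e$ as a weight, and $\tau\cdot\bn$ pulled back to $\widehat e$ is not in $P_{k-1}(\widehat e)$ in general, so the pairing does not vanish. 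Thus the asserted inequality (nonzero boundary contribution in the presence of curved segments) is really a structural observation rather than something requiring a separate proof, and it motivates the subsequent error analysis.
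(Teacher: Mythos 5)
Your proof is correct and follows essentially the same route as the paper: invoke the definition (\ref{dwd}) with $v=Q_h\phi$, use the orthogonality of $Q_0$ against $\nabla\cdot\tau\in P_{k-2}(T)\subset P_k(T)$, and integrate by parts to split off the boundary term $\langle Q_b\phi-\phi,\tau\cdot\bn\rangle_{\partial T}$. The additional remark on why this term survives on curved edges is consistent with the paper's discussion surrounding Lemma \ref{Lemma:Wonderful}.
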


\begin{proof}
Using (\ref{dwd}), the integration by parts and the definitions of $Q_h$ and $\bbQ_h$, we have for any $\tau\in[P_{k-1}(T)]^2$ that
\begin{eqnarray*}
(\nabla_w(Q_h\phi),\tau)_T
&=&-(Q_0\phi,\nabla\cdot\tau)_T+\langle Q_b\phi,\tau\cdot\bn\rangle_{\pT}\\
&=&-(\phi,\nabla\cdot\tau)_T+\langle\phi,\tau\cdot\bn\rangle_{\partial T}+\langle Q_b\phi-\phi,\tau\cdot\bn\rangle_{\partial T}\\
&=&(\nabla\phi,\tau)_T+\langle Q_b\phi-\phi,\tau\cdot\bn\rangle_{\partial T},
\end{eqnarray*}
which implies the desired identity (\ref{key}).
\end{proof}

\begin{lemma}\label{Lemma:error-equation-prep}
For any $w\in H^1(\Omega)\cap H^{1+\gamma}(\Omega)$ with $\gamma>\frac12$, and $v\in W_h$, the following identity holds true
\begin{equation}\label{April-5:888}
\begin{split}
\sum_{T\in\T_h}(\nabla_wQ_hw,\nabla_wv)_T
=&\sum_{T\in\T_h}(\nabla w,\nabla v_0)_T+\sum_{T\in\T_h}\l\nabla w\cdot\bn,v_b-v_0\r_\pT \\
 &+\ell_1(w,v)+\ell_2(w,v),
\end{split}
\end{equation}
where
\begin{eqnarray*}
\ell_1(w,v)&=&\sum_{T\in\T_h}\langle(\nabla w-\bbQ_h\nabla w)\cdot\bn,v_0-v_b\rangle_\pT,\\
\ell_2(w,v)&=&\sum_{T\in\T_h}\l Q_bw-w,\nabla_wv\cdot\bn\r_\pT.
\end{eqnarray*}
\end{lemma}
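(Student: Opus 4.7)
The plan is to transform the left-hand side of (\ref{April-5:888}) into the stated form using only the projection identity (\ref{key}) from the preceding lemma together with the integration-by-parts form (\ref{dwd-2}) of the discrete weak gradient. The argument is essentially algebraic; the regularity hypothesis $w\in H^{1+\gamma}(\Omega)$ with $\gamma>\tfrac12$ is needed only to guarantee that $\nabla w$ has an $L^2$ trace on $\pT$, so that the boundary pairing $\langle\nabla w\cdot\bn,v_b-v_0\rangle_{\pT}$ is meaningful on curved edges.

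First, I would apply (\ref{key}) on each element $T$ with $\phi=w$ and the admissible test function $\tau=\nabla_w v\in[P_{k-1}(T)]^2$ to get
$$
(\nabla_w Q_h w,\nabla_w v)_T = (\nabla w,\nabla_w v)_T + \langle Q_b w-w,\nabla_w v\cdot\bn\rangle_{\pT}.
$$
After summing over $T\in\T_h$, the boundary contribution is exactly $\ell_2(w,v)$. Next I would process the volume term $(\nabla w,\nabla_w v)_T$. Because $\nabla_w v\in[P_{k-1}(T)]^2$, the defining property of the $L^2$ projection $\bbQ_h$ gives $(\nabla w,\nabla_w v)_T=(\bbQ_h\nabla w,\nabla_w v)_T$. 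Applying (\ref{dwd-2}) with $\bq=\bbQ_h\nabla w$ then yields
$$
(\bbQ_h\nabla w,\nabla_w v)_T = (\nabla v_0,\bbQ_h\nabla w)_T + \langle v_b-v_0,\bbQ_h\nabla w\cdot\bn\rangle_{\pT},
$$
where the first term collapses to $(\nabla v_0,\nabla w)_T$ since $\nabla v_0\in[P_{k-1}(T)]^2$. For the boundary term I would split $\bbQ_h\nabla w=\nabla w-(\nabla w-\bbQ_h\nabla w)$ and rewrite it as
$$
\langle v_b-v_0,\bbQ_h\nabla w\cdot\bn\rangle_{\pT} = \langle \nabla w\cdot\bn,v_b-v_0\rangle_{\pT} + \langle (\nabla w-\bbQ_h\nabla w)\cdot\bn,v_0-v_b\rangle_{\pT}.
$$
Summed over $T$, the last term is precisely $\ell_1(w,v)$, and assembling all pieces produces (\ref{April-5:888}).

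The one point that requires care, and is the closest thing to a real obstacle, is the well-definedness of the boundary pairings containing $\nabla w\cdot\bn$ on curved edges; this is handled by the fractional regularity assumption $\gamma>\tfrac12$ via the standard trace theorem. The remaining manipulations are purely algebraic and are unaffected by the curvature of $\pT$, because the definitions of $\nabla_w$, $Q_h$, and $\bbQ_h$ have been arranged to absorb the mapping $F_e$ automatically. In particular, the fact (noted just after (\ref{key})) that $\langle Q_b\phi-\phi,\tau\cdot\bn\rangle_{\pT}$ is generally nonzero on curved edges is precisely what forces the correction term $\ell_2$ to appear and makes the identity nontrivial compared to the straight-edge case.
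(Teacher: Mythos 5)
Your proposal is correct and follows essentially the same route as the paper's own proof: apply (\ref{key}) with $\tau=\nabla_wv$ to produce $\ell_2$, replace $\nabla w$ by $\bbQ_h\nabla w$, apply (\ref{dwd-2}) with $\bq=\bbQ_h\nabla w$, and split the resulting boundary term to extract $\ell_1$; the signs in your splitting check out. Your added remark on why $\gamma>\tfrac12$ is needed for the trace of $\nabla w\cdot\bn$ is a sensible clarification that the paper leaves implicit.
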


\begin{proof}
From (\ref{key}) with $\phi=w$ and $\tau=\nabla_wv$ we obtain
\begin{equation}\label{April-5:001}
(\nabla_wQ_hw,\nabla_wv)_T=(\nabla w,\nabla_wv)_T+\l Q_bw-w,\nabla_wv\cdot\bn\r_\pT.
\end{equation}
Using $(\nabla w,\nabla_wv)_T=(\bbQ_h\nabla w,\nabla_wv)_T$,(\ref{April-5:001}) can be rewritten as
\begin{equation}\label{April-5:002}
(\nabla_wQ_hw,\nabla_wv)_T=(\bbQ_h\nabla w,\nabla_wv)_T+\l Q_bw-w,\nabla_wv\cdot\bn\r_\pT.
\end{equation}
Now by applying (\ref{dwd-2}) with $\bq=\bbQ_h\nabla w$ to the first term on the right-hand side of (\ref{April-5:002}) we arrive at
\begin{equation}\label{April-5:003}
\begin{split}
&(\nabla_wQ_hw,\nabla_wv)_T\\
=&(\bbQ_h\nabla w,\nabla v_0)_T+\l\bbQ_h\nabla w\cdot\bn,v_b-v_0\r_\pT+\l Q_bw-w,\nabla_wv\cdot\bn\r_\pT\\
=&(\nabla w,\nabla v_0)_T+\l\bbQ_h\nabla w\cdot\bn,v_b-v_0\r_\pT+\l Q_bw-w,\nabla_wv\cdot\bn\r_\pT\\
=&(\nabla w,\nabla v_0)_T+\l\nabla w\cdot\bn,v_b-v_0\r_\pT+\l(\bbQ_h\nabla w-\nabla w)\cdot\bn,v_b-v_0\r_\pT\\
 &+\l Q_bw-w,\nabla_wv\cdot\bn\r_\pT.
\end{split}
\end{equation}
Summing (\ref{April-5:003}) over all $T\in\T_h$ gives rise to
\begin{equation*}\label{April-5:004}
\begin{split}
\sum_{T\in\T_h}(\nabla_wQ_hw,\nabla_wv)_T
=&\sum_{T\in\T_h}(\nabla w,\nabla v_0)_T+\sum_{T\in\T_h}\l\nabla w\cdot\bn,v_b-v_0\r_\pT\\
 &+\ell_1(w,v)+\ell_2(w,v),
\end{split}
\end{equation*}
which confirms the identity (\ref{April-5:888}).
\end{proof}

\medskip

Let $u_h=\{u_0,u_b\}\in W_h$ be the weak Galerkin finite element solution of (\ref{wg}) and $u$ be the exact solution of (\ref{pde})-(\ref{bc}). By error function, denoted by $e_h$, we mean the difference of the $L^2$ projection of the exact solution $u$ and its weak Galerkin finite element solution $u_h$; i.e., $e_h=Q_h u-u_h=\{e_0,e_b\}$ with
$$
e_0=Q_0u-u_0,\quad e_b=Q_bu-u_b.
$$

We are ready to derive an error equation for the weak Galerkin finite element scheme (\ref{wg}) which the error function $e_h$ will satisfy.

\begin{theorem}\label{Theorem:error-equation} Assume that the exact solution $u$ of the model problem (\ref{pde})-(\ref{bc}) is sufficiently regular such that $u\in H^1(\Omega)\cap H^{1+\gamma}(\Omega)$, $\gamma>\frac12$. Let $e_h$ be the error function of the weak Galerkin finite element scheme (\ref{wg}). Then, for any $v\in W_h^0$ there holds
\begin{eqnarray}
a(e_h,v)=\ell_1(u,v)+\ell_2(u,v)+s(Q_hu,v).\label{ee}
\end{eqnarray}
\end{theorem}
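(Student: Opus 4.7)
The plan is to compute $a(e_h,v)=a(Q_hu,v)-a(u_h,v)$ and reduce each piece to the right-hand side of (\ref{ee}). By the WG scheme (\ref{wg}) we immediately have $a(u_h,v)=(f,v_0)$ for all $v\in W_h^0$, so the entire task reduces to analyzing $a(Q_hu,v)$.

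By the definition of $a(\cdot,\cdot)$, I would split
\[
a(Q_hu,v)=\sum_{T\in\T_h}(\nabla_wQ_hu,\nabla_wv)_T+s(Q_hu,v),
\]
and then invoke Lemma~\ref{Lemma:error-equation-prep} with $w=u$ (the regularity $u\in H^1(\Omega)\cap H^{1+\gamma}(\Omega)$, $\gamma>\tfrac12$, is exactly what makes this lemma applicable) to rewrite the first sum as
\[
\sum_{T\in\T_h}(\nabla u,\nabla v_0)_T+\sum_{T\in\T_h}\l\nabla u\cdot\bn,v_b-v_0\r_\pT+\ell_1(u,v)+\ell_2(u,v).
\]
Thus $\ell_1(u,v)+\ell_2(u,v)+s(Q_hu,v)$ has already surfaced, and I only need to verify that the remaining two terms collapse to $(f,v_0)$.

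The next step is standard integration by parts on each element: since $u\in H^{1+\gamma}(\Omega)$ the elementwise identity $(\nabla u,\nabla v_0)_T=-(\Delta u,v_0)_T+\l\nabla u\cdot\bn,v_0\r_\pT$ is legitimate, and summing over $T$ and using $-\Delta u=f$ yields
\[
\sum_{T\in\T_h}(\nabla u,\nabla v_0)_T+\sum_{T\in\T_h}\l\nabla u\cdot\bn,v_b-v_0\r_\pT=(f,v_0)+\sum_{T\in\T_h}\l\nabla u\cdot\bn,v_b\r_\pT.
\]
It then remains to show the last sum vanishes: on any interior edge $e\in\E_h^0$, $v_b$ is single-valued while the two outward normals point in opposite directions, so the two contributions cancel once the normal trace $\nabla u\cdot\bn$ is well-defined and continuous across $e$; on boundary edges, $v_b=0$ because $v\in W_h^0$. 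Assembling all pieces gives $a(Q_hu,v)=(f,v_0)+\ell_1(u,v)+\ell_2(u,v)+s(Q_hu,v)$, and subtracting $a(u_h,v)=(f,v_0)$ delivers (\ref{ee}).

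The only delicate point in this plan is the justification that the interior-edge normal-flux jumps of $\nabla u$ cancel when tested against $v_b\in L^2(e)$. This is precisely what forces the assumption $\gamma>\tfrac12$: the trace of $\nabla u\cdot\bn$ on each $e$ then lives in $H^{\gamma-1/2}(e)\hookrightarrow L^2(e)$, and the jump across interior edges is zero because $u\in H^1(\Omega)$ satisfies the PDE in a sense strong enough that the normal component of $\nabla u$ has no singular distributional part across interfaces. Everything else is bookkeeping.
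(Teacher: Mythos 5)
Your proposal is correct and follows essentially the same route as the paper: apply Lemma~\ref{Lemma:error-equation-prep} with $w=u$, combine with the elementwise integration-by-parts identity obtained by testing $-\Delta u=f$ against $v_0$ (the paper's equation (\ref{m1})), use the cancellation of $\sum_{T}\langle\nabla u\cdot\bn,v_b\rangle_{\partial T}$ from the single-valuedness of $v_b$ and $v_b=0$ on $\partial\Omega$, then add $s(Q_hu,v)$ and subtract the scheme (\ref{wg}). The only cosmetic difference is that you start from the decomposition $a(e_h,v)=a(Q_hu,v)-a(u_h,v)$ while the paper assembles $a(Q_hu,v)=(f,v_0)+\ell_1+\ell_2+s(Q_hu,v)$ first and subtracts at the end.
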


\begin{proof}
By testing (\ref{pde}) with the first component $v_0$ of $v=\{v_0,v_b\}\in W_h^0$, we have
\begin{equation}\label{m1}
\sum_{T\in\T_h}(\nabla u,\nabla v_0)_T-\sum_{T\in\T_h}\langle\nabla u\cdot\bn,v_0-v_b\rangle_\pT=(f,v_0),
\end{equation}
where we have used the fact that $\sum_{T\in\T_h}\langle\nabla u\cdot\bn,v_b\rangle_\pT=0$ since $v_b=0$ on $\partial\Omega$. Next, from Lemma
\ref{Lemma:error-equation-prep} we obtain
\begin{equation}\label{April-5:110}
\begin{split}
\sum_{T\in\T_h}(\nabla_wQ_hu,\nabla_wv)_T
=&\sum_{T\in\T_h}(\nabla u,\nabla v_0)_T+\sum_{T\in\T_h}\l\nabla u\cdot\bn,v_b-v_0\r_\pT\\
 &+\ell_1(u,v)+\ell_2(u,v).
\end{split}
\end{equation}
Combining (\ref{m1}) with (\ref{April-5:110}) yields
\begin{eqnarray*}
\sum_{T\in\T_h}(\nabla_wQ_hu,\nabla_wv)_T=(f,v_0)+\ell_1(u,v)+\ell_2(u,v).
\end{eqnarray*}
Adding $s(Q_hu,v)$ to both sides of the above equation gives
\begin{equation}\label{j2}
a(Q_hu,v)=(f,v_0)+\ell_1(u,v)+\ell_2(u,v)+s(Q_hu,v).
\end{equation}
Finally, subtracting (\ref{wg}) from (\ref{j2}) yields
\begin{eqnarray*}
a(e_h,v)=\ell_1(u,v)+\ell_2(u,v)+s(Q_hu,v),\quad \forall v\in W_h^0,
\end{eqnarray*}
which completes the proof of the lemma.
\end{proof}

\section{Some Technical Estimates}\label{Section:Estimates}

For any function $\varphi\in H^1(T)$, we use the ideas presented in \cite{wy3655} to obtain the following trace inequality
\begin{equation}\label{trace}
\|\varphi\|_e^2\leq C\left(h_T^{-1}\|\varphi\|_T^2+h_T\|\nabla\varphi\|_{T}^2\right).
\end{equation}
If $\varphi$ is a polynomial, using the inverse inequality, the trace inequality  (\ref{trace}) becomes
\begin{equation}\label{trace2}
\|\varphi\|_e^2\leq Ch_T^{-1}\|\varphi\|_T^2.
\end{equation}

In the weak finite element space $W_h$, we introduce the following discrete $H^1$-seminorm; i.e.,
\begin{equation}\label{EQ:discrete-H1-norm}
\|v\|_{1,h}=\left(\sum_{T\in\T_h}\|\nabla v_0\|_T^2+h_T^{-1}\|Q_bv_0-v_b\|_\pT^2\right)^{\frac12},\forall v=\{v_0,v_b\}\in W_h.
\end{equation}
It is not hard to see that $\|\cdot\|_{1,h}$ indeed provides a norm for the subspace $W_h^0$ which consists of the weak finite element functions with vanishing boundary value.

\begin{lemma}\label{Lemma:Wonderful}
On each element $T\in\T_h$, for any $\phi\in H^1(T)$ and $\bq\in[P_{k-1}(T)]^2$, there exists a positive constant $C$, such that
\begin{equation}\label{EQ:July06-2016:600}
|\langle\phi-Q_b\phi,\bq\cdot\bn\rangle_e|\le\left\{
\begin{array}{ll}
Ch_e^{1/2}\|\phi-Q_b\phi\|_\pT \,\|\bq\|_{T},\qquad\qquad\qquad \mbox{for $k\ge 1$,}\\
Ch_e^{3/2}\|\phi-Q_b\phi\|_\pT \,(\|\bq\|_{T}+\|\nabla \bq\|_T),\quad\mbox{for $k\ge 2$}.
\end{array}
\right.
\end{equation}
Moreover, taking $\phi=v_0\in P_k(T)$, then for any $k\ge1$, there holds
\begin{equation}\label{EQ:July06-2016:800}
|\langle v_0-Q_bv_0,\bq\cdot\bn\rangle_e|\le Ch_e\|\nabla v_0\|_T \ \|\bq\|_{T}.
\end{equation}
\end{lemma}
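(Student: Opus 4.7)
The plan is to pull the edge integral back to the straight reference edge $\he$ via the parametrization $F_e$, where the weighted projection $\widehat{Q}_b$ onto $P_{k-1}(\he)$ is available, and then exploit its orthogonality. By change of variables with Jacobian $J = |F_e'|$, together with the identity $\widehat{Q_b\phi} = \widehat{Q}_b\widehat\phi$ coming from the definition of $Q_b$, one rewrites
\[
\langle \phi - Q_b\phi,\bq\cdot\bn\rangle_e = \int_{\he}(\widehat\phi - \widehat{Q}_b\widehat\phi)\,\widehat g\,J\,d\hs, \qquad \widehat g(\hs) := (\bq\cdot\bn)(F_e(\hs)).
\]
Since $\widehat{Q}_b$ is orthogonal to every $\widehat\pi \in P_{k-1}(\he)$ in the $J$-weighted inner product, one may subtract such a $\widehat\pi$ from $\widehat g$ at no cost. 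Cauchy--Schwarz together with the isometry $\|\widehat\phi - \widehat{Q}_b\widehat\phi\|_{L^2_J(\he)} = \|\phi - Q_b\phi\|_e \le \|\phi - Q_b\phi\|_\pT$ then reduces the task to estimating $\|\widehat g - \widehat\pi\|_{L^2_J(\he)}$.

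For the main estimate, I would take $\widehat\pi$ to be the best $L^2(\he)$ approximation of $\widehat g$ in $P_{k-1}(\he)$ and apply the Bramble--Hilbert lemma on the interval of length $h_e$ to obtain $\|\widehat g - \widehat\pi\|_{L^2(\he)} \le C h_e^k \|\widehat g^{(k)}\|_{L^2(\he)}$. A repeated chain-rule expansion of $\widehat g^{(k)}$, using the smoothness bound (\ref{EQ:mappingF}) on $F_e$ (which also controls $\bn(\hs)$ and its derivatives), shows that $\widehat g^{(k)}$ is a bounded combination of terms $(D^m\bq)(F_e(\hs))$ for $m = 0,\ldots,k-1$; crucially, the would-be $m=k$ contribution vanishes since $D^k\bq = 0$ for $\bq \in [P_{k-1}(T)]^2$. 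Combining the trace inequality (\ref{trace2}) with the polynomial inverse inequality $\|D^m\bq\|_T \le C h_T^{1-m}\|\nabla\bq\|_T$ for $m \ge 1$ yields
\[
\|\widehat g^{(k)}\|_{L^2(\he)} \le C h_T^{3/2-k}\bigl(\|\bq\|_T + \|\nabla\bq\|_T\bigr),
\]
whence $\|\widehat g - \widehat\pi\|_{L^2(\he)} \le C h_e^{3/2}(\|\bq\|_T + \|\nabla\bq\|_T)$, which is the $k \ge 2$ estimate. The universal $k \ge 1$ estimate then follows by absorbing $\|\nabla\bq\|_T \le C h_T^{-1}\|\bq\|_T$ via the inverse inequality, turning the bound into $C h_e^{1/2}\|\bq\|_T$. (For $k = 1$ one can also work directly: take $\widehat\pi \in P_0(\he)$, use Poincar\'e, and use $\nabla\bq = 0$.)

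For the auxiliary identity (\ref{EQ:July06-2016:800}), apply the $k \ge 1$ estimate with $\phi = v_0 \in P_k(T)$ and reduce to bounding $\|v_0 - Q_b v_0\|_\pT$. Because constants lie in $V_b(e,k-1)$ for every $k \ge 1$, the same pullback-and-Poincar\'e argument gives $\|\widehat{v_0} - \widehat{Q}_b\widehat{v_0}\|_{L^2(\he)} \le C h_e\|\widehat{v_0}'\|_{L^2(\he)}$; the chain rule $\widehat{v_0}'(\hs) = \nabla v_0(F_e(\hs))\cdot F_e'(\hs)$ together with the trace inequality produces $\|\widehat{v_0}'\|_{L^2(\he)} \le C h_T^{-1/2}\|\nabla v_0\|_T$, so that $\|v_0 - Q_b v_0\|_e \le C h_e^{1/2}\|\nabla v_0\|_T$. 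Summation over the edges of $\pT$ yields $\|v_0 - Q_b v_0\|_\pT \le C h_T^{1/2}\|\nabla v_0\|_T$, and substituting back delivers the claimed $C h_e\|\nabla v_0\|_T\|\bq\|_T$.

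The principal obstacle is the careful bookkeeping in the chain-rule expansion of $\widehat g^{(k)}$: the structural fact $D^k\bq = 0$ must be invoked exactly on the top-order term (otherwise the target $h_e$-exponent is violated), and the net scaling only comes out correctly once one balances the Bramble--Hilbert gain $h_e^k$, the $h_T^{-1/2}$ loss per trace restriction and the $h_T^{-1}$ loss per polynomial differentiation.
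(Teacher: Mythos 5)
Your proposal follows essentially the same route as the paper's own proof: pull the edge integral back to the reference edge $\he$, use the orthogonality of the weighted projection $\hQ_b$ to subtract an arbitrary $\chi\in P_{k-1}(\he)$, apply Cauchy--Schwarz with the Bramble--Hilbert bound $Ch_e^k\|\widehat g^{(k)}\|$, and control $\widehat g^{(k)}$ by the chain rule together with the key fact $\nabla^k\bq=0$, the trace inequality (\ref{trace2}) and inverse estimates, keeping $\|\nabla\bq\|_T$ for $k\ge 2$ and absorbing it for the universal $k\ge1$ bound. Your derivation of (\ref{EQ:July06-2016:800}) via $\|v_0-Q_bv_0\|_\pT\le Ch_e^{1/2}\|\nabla v_0\|_T$ likewise matches the paper, and the scaling bookkeeping is correct.
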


\begin{proof}
For each straight edge $e\subset \pT$, we have
\begin{equation}\label{EQ:July06-2016:001}
\langle\phi-Q_b\phi,\bq\cdot\bn\rangle_e=0,
\end{equation}
since $Q_b$ is the usual $L^2$ projection onto the space $P_{k-1}(e)$ and $\bq\cdot\bn|_e\in P_{k-1}(e)$. If $e\subset\pT$ is a curved edge, then the identity (\ref{EQ:July06-2016:001}) generally does not hold true. However, by mapping to the reference edge $\he$, we have
\begin{eqnarray*}
\langle\phi-Q_b\phi,\bq\cdot\bn\rangle_e
&=&\int_{\he}({\widehat\phi}-{\widehat Q}_b{\widehat\phi}){\widehat\bq}\cdot\bn |J_e|d\he\\
&=&\int_{\he}({\widehat\phi}-{\widehat Q}_b{\widehat\phi})({\widehat\bq}\cdot\bn-\chi)|J_e|d\he,
\end{eqnarray*}
where $J_e$ is the Jacobian of the mapping, and $\chi \in P_{k-1}(\he)$ is any polynomial of degree $k-1$ on the reference edge $\he$. Thus, using the Cauchy-Schwarz inequality gives
\begin{equation}\label{EQ:July05:001}
|\langle\phi-Q_b\phi,\bq\cdot\bn\rangle_e|
\le Ch_e^{k}\left(\int_{\he}\left|{\widehat\phi}-{\widehat Q}_b{\widehat\phi}\right|^2|J_e|d\he\right)^{\frac12}
      \left(\int_{\he}\left|\frac{d^{k}({\widehat\bq\cdot\bn})}{d\hs^k}\right|^2|J_e| d\he\right)^{\frac12}.
\end{equation}
From the chain rule and the assumption (\ref{EQ:mappingF}) we have
\begin{eqnarray*}
\left|\frac{d^{k}({\widehat\bq}\cdot\bn)}{d\hs^k}\right|
\leq\sum_{|\alpha|=0}^kC_\alpha|\nabla^\alpha\bq|
\leq\sum_{|\alpha|=0}^{k-1}C_\alpha|\nabla^\alpha\bq|,
\end{eqnarray*}
where we have used the fact that $\nabla^k\bq=0$ as $\bq$ is a polynomial of degree $k-1$. By mapping back to the edge $e$ we have
\begin{equation}\label{EQ:July05:003}
\int_{\he}\left|\frac{d^{k}({\widehat\bq}\cdot\bn)}{d\hs^k}\right|^2|J_e|d\he
\leq C\sum_{|\alpha|=0}^{k-1}\|\nabla^\alpha\bq\|_e^2\leq Ch_e^{2-2k}\|\bq\|_e^2\leq Ch_e^{1-2k}\|\bq\|_T^2,
\end{equation}
where we have used the trace inequality (\ref{trace2}). Substituting (\ref{EQ:July05:003}) into (\ref{EQ:July05:001}) yields
\begin{equation}\nonumber
|\langle\phi-Q_b\phi,\bq\cdot\bn\rangle_e|\le Ch_e^{\frac12}\|\phi-Q_b\phi\|_\pT \ \|\bq\|_{T},
\end{equation}
which verifies the estimate (\ref{EQ:July06-2016:600}) for $k\ge1$. In the case of $k\ge2$, the inequality (\ref{EQ:July05:003}) can be replaced by
\begin{equation}\label{EQ:July05:003new}\nonumber
\int_{\he}\left|\frac{d^k({\widehat\bq}\cdot\bn)}{d\hs^k}\right|^2|J_e|d\he\leq Ch_e^{3-2k}(\|\bq\|_{T}^2+\|\nabla\bq\|_T^2),
\end{equation}
which, together with (\ref{EQ:July05:001}), verifies the second estimate in (\ref{EQ:July06-2016:600}) for $k\ge2$.

Finally, (\ref{EQ:July06-2016:800}) stems from (\ref{EQ:July06-2016:600}) with the following inequality
$$
\|v_0-Q_bv_0\|_\pT\leq Ch_e\|\nabla v_0\|_\pT\leq Ch_e^{\frac12}\|\nabla v_0\|_T,
$$
where we have used the trace inequality (\ref{trace2}). This completes the proof of the lemma.
\end{proof}

\begin{lemma} For any $v=\{v_0,v_b\}\in W_h$, there holds
\begin{eqnarray}
h_T^{-1}\|v_0-v_b\|_\pT^2&\leq&C(\|\nabla v_0\|_T^2+h_T^{-1}\|Q_bv_0-v_b\|_\pT^2),\label{happy.000}\\
\|\nabla v_0\|_T^2&\le&C\left(\|\nabla_w v\|_T^2+h_T^{-1}\|Q_bv_0-v_b\|_\pT^2\right),\label{happy.001}\\
\|\nabla_wv\|_T^2&\le&C\left(\|\nabla v_0\|_T^2+h_T^{-1}\|Q_bv_0-v_b\|_\pT^2\right),\label{happy.001s}
\end{eqnarray}
provided the meshsize $h$ is sufficiently small, where $C$ is a positive constant. Consequently, the discrete $H^1$-norm $\|\cdot\|_{1,h}$ is equivalent to the triple-bar norm $\3bar\cdot\3bar$ in the sense that there exist positive constants $\alpha_1$ and $\alpha_2$ such that
\begin{equation}\label{happy.001new}
\alpha_1\3barv\3bar\leq \|v\|_{1,h}\leq\alpha_2\3barv\3bar.
\end{equation}
\end{lemma}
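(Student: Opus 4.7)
The plan is to establish the three local bounds on each element $T\in\T_h$ and then sum over $T$ to obtain the norm equivalence. All three bounds share a common pattern: every boundary integral of the form $\langle v_0-v_b,\bq\cdot\bn\rangle_\pT$ is split as
\[
\langle v_0-v_b,\bq\cdot\bn\rangle_\pT = \langle v_0-Q_bv_0,\bq\cdot\bn\rangle_\pT + \langle Q_bv_0-v_b,\bq\cdot\bn\rangle_\pT,
\]
where the first ``curved-edge'' piece would vanish if $\pT$ consisted entirely of straight segments and is controlled through Lemma~\ref{Lemma:Wonderful}, while the second ``stabilizer'' piece is controlled directly by $\|Q_bv_0-v_b\|_\pT$ together with the polynomial trace inequality (\ref{trace2}).

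For (\ref{happy.000}) I would apply the triangle inequality $\|v_0-v_b\|_\pT \le \|v_0-Q_bv_0\|_\pT + \|Q_bv_0-v_b\|_\pT$; the approximation bound $\|v_0-Q_bv_0\|_\pT\le Ch_T^{1/2}\|\nabla v_0\|_T$ derived at the end of the proof of Lemma~\ref{Lemma:Wonderful} then finishes the job after multiplication by $h_T^{-1/2}$ and squaring.

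For (\ref{happy.001}) and (\ref{happy.001s}) I would test the identity (\ref{dwd-2}) with $\bq=\nabla v_0$ and $\bq=\nabla_wv$, respectively (both lie in $[P_{k-1}(T)]^2$ since $v_0\in P_k(T)$), producing the two identities
\[
\|\nabla v_0\|_T^2 = (\nabla_wv,\nabla v_0)_T + \langle v_0-v_b,\nabla v_0\cdot\bn\rangle_\pT,
\]
\[
\|\nabla_wv\|_T^2 = (\nabla v_0,\nabla_wv)_T + \langle v_b-v_0,\nabla_wv\cdot\bn\rangle_\pT.
\]
In each identity the volume term is handled by Cauchy-Schwarz. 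The stabilizer piece of the boundary term is bounded by $Ch_T^{-1/2}\|Q_bv_0-v_b\|_\pT\,\|\bq\|_T$ after applying (\ref{trace2}) to the polynomial $\bq$, and the curved-edge piece is bounded by (\ref{EQ:July06-2016:800}) of Lemma~\ref{Lemma:Wonderful}, contributing at most $Ch_T\|\nabla v_0\|_T\,\|\bq\|_T$. Dividing through by $\|\nabla v_0\|_T$ or $\|\nabla_wv\|_T$ and squaring produces the desired local bounds.

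The main obstacle is the self-coupling that appears in the proof of (\ref{happy.001}): the curved-edge contribution there is of the form $Ch_T\|\nabla v_0\|_T^2$, which has the same structure as the left-hand side and must be absorbed. This absorption is exactly where the ``sufficiently small meshsize'' hypothesis enters, since one needs $Ch_T\le\tfrac12$ in order to obtain $(1-Ch_T)\ge\tfrac12$. The analogous curved-edge term in the proof of (\ref{happy.001s}) is the cross-term $Ch_T\|\nabla v_0\|_T\|\nabla_wv\|_T$, which is harmless and requires no smallness. Finally, summing (\ref{happy.001}) and (\ref{happy.001s}) over $T\in\T_h$ and adding $\sum_T h_T^{-1}\|Q_bv_0-v_b\|_\pT^2$ to both sides immediately yields the two-sided norm equivalence (\ref{happy.001new}) with constants $\alpha_1,\alpha_2>0$ depending only on the shape regularity of $\T_h$.
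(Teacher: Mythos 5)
Your proposal is correct and follows essentially the same route as the paper: the triangle inequality together with the bound $\|v_0-Q_bv_0\|_{\pT}\le Ch_T^{1/2}\|\nabla v_0\|_T$ for (\ref{happy.000}), and the identity (\ref{dwd-2}) with the boundary term split through $Q_bv_0$, controlled by (\ref{EQ:July06-2016:800}) and the trace inequality (\ref{trace2}), with the $Ch_T\|\nabla v_0\|_T^2$ term absorbed for small $h$, for (\ref{happy.001})--(\ref{happy.001s}). The only cosmetic difference is that the paper bounds $(\nabla v_0,\bq)_T$ for arbitrary $\bq\in[P_{k-1}(T)]^2$ before specializing, whereas you substitute $\bq=\nabla v_0$ and $\bq=\nabla_w v$ directly; this changes nothing of substance.
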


\begin{proof} Note that on each edge $e\subset\pT$ one has
\begin{equation}\label{es1}
\|v_0-Q_bv_0\|_e^2=\int_\he|{\widehat v}_0-\widehat{Q}_b\widehat{v}_0|^2|J_e|d\he\leq Ch_e\|\nabla v_0\|_T^2.
\end{equation}
It follows from the triangle inequality that
\begin{eqnarray*}
\|v_0-v_b\|_\pT^2
&\leq&2\|v_0-Q_bv_0\|_\pT^2+2\|Q_bv_0-v_b\|_\pT^2\\
&\leq&Ch_T\|\nabla v_0\|_T^2+2\|Q_bv_0-v_b\|_\pT^2,
\end{eqnarray*}
where we used (\ref{es1}). This implies the inequality (\ref{happy.000}).

For any $v=\{v_0,v_b\}\in W_h$, it follows from (\ref{dwd-2}) that
\begin{equation}\label{March24-2013-01}
\begin{split}
(\nabla_wv,\bq)_T
&=(\nabla v_0,\bq)_T+\l v_b-v_0,\bq\cdot\bn\r_\pT\\
&=(\nabla v_0,\bq)_T+\l v_b-Q_bv_0,\bq\cdot\bn\r_\pT+\l Q_bv_0-v_0,\bq\cdot\bn\r_\pT,
\end{split}
\end{equation}
for all $\bq\in[P_{k-1}(T)]^2$. Thus, from the Cauchy-Schwarz inequality, the trace inequality (\ref{trace2}), and the estimate
(\ref{EQ:July06-2016:800}), we obtain from (\ref{March24-2013-01}) that
\begin{eqnarray*}
|(\nabla v_0,\bq)_T|
&\le&\|\nabla_wv\|_T\|\bq\|_T+\|Q_bv_0-v_b\|_\pT\|\bq\|_\pT+Ch_T\|\nabla v_0\|_T\|\bq\|_T\\
&\le&\|\nabla_wv\|_T\|\bq\|_T+Ch_T^{-\frac12}\|Q_bv_0-v_b\|_\pT\|\bq\|_T+Ch_T\|\nabla v_0\|_T\|\bq\|_T,
\end{eqnarray*}
which leads to
$$
\|\nabla v_0\|_T\leq C(\|\nabla_w v\|_T+h_T^{-\frac12}\|Q_bv_0-v_b\|_\pT +h_T\|\nabla v_0\|_T).
$$
This gives rise to the estimate (\ref{happy.001}) for sufficiently small $h_T$. The estimate (\ref{happy.001s}) can be derived in a similar, but simpler fashion. (\ref{happy.001new}) can be obtained easily using (\ref{happy.000})-(\ref{happy.001s}).
\end{proof}

\medskip

The following two Lemmas contain some useful estimates for the local $L^2$ projection operators.

\begin{lemma}
For any $w\in H^{m+1}(T)$, $m\in[0,k]$, there holds
\begin{equation}\label{k2}
\|\nabla w-\bbQ_h\nabla w\|_{T}+\|\nabla(w-Q_0w)\|_{T}+h_T^{-1}\|w-Q_0 w\|_T\le Ch_T^{m}\|w\|_{m+1,T}.
\end{equation}
\end{lemma}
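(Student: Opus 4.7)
\medskip

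\noindent\textbf{Proof proposal.} The plan is to reduce the estimate on the curved element $T$ to a standard polynomial-approximation estimate on the shape-regular circumscribed simplex $S(T)$ guaranteed by assumption (A4), and then exploit that $Q_0$ and $\mathbb{Q}_h$ are best $L^2$-approximations onto $P_k(T)$ and $[P_{k-1}(T)]^2$, respectively.

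\emph{Step 1 (extension).} Since $T$ is a Lipschitz domain sitting inside a shape-regular simplex $S(T)$ with $h_{S(T)}\le \gamma_*h_T$, introduce a continuous Stein-type extension $E:H^{m+1}(T)\to H^{m+1}(S(T))$ whose operator norm can be taken independent of $T$ (after a uniform-in-$h$ scaling enabled by shape regularity). This yields $\|Ew\|_{m+1,S(T)}\le C\|w\|_{m+1,T}$.

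\emph{Step 2 (Bramble--Hilbert on $S(T)$).} On the shape-regular simplex $S(T)$, the classical Bramble--Hilbert lemma provides a polynomial $\pi w\in P_k(S(T))\subset P_k(T)$ such that, for $j=0,1$,
\begin{equation*}
\|Ew-\pi w\|_{j,S(T)}\le C\,h_{S(T)}^{m+1-j}\,|Ew|_{m+1,S(T)}.
\end{equation*}
Restricting to $T\subset S(T)$ and chaining with Step 1 gives the local bound
\begin{equation*}
\|w-\pi w\|_{T}+h_T\|\nabla(w-\pi w)\|_{T}\le C\,h_T^{m+1}\,\|w\|_{m+1,T}.
\end{equation*}

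\emph{Step 3 (assemble the three terms).} First, since $Q_0$ is the $L^2$-best approximation onto $P_k(T)$ and $\pi w\in P_k(T)$,
\begin{equation*}
h_T^{-1}\|w-Q_0w\|_T\le h_T^{-1}\|w-\pi w\|_T\le Ch_T^{m}\|w\|_{m+1,T}.
\end{equation*}
For the gradient term, split via $\pi w$,
\begin{equation*}
\|\nabla(w-Q_0w)\|_T\le\|\nabla(w-\pi w)\|_T+\|\nabla(\pi w-Q_0w)\|_T,
\end{equation*}
and on the polynomial $\pi w-Q_0w\in P_k(T)$ apply an inverse inequality $\|\nabla p\|_T\le Ch_T^{-1}\|p\|_T$, so that
\begin{equation*}
\|\nabla(\pi w-Q_0w)\|_T\le Ch_T^{-1}\bigl(\|\pi w-w\|_T+\|w-Q_0w\|_T\bigr)\le Ch_T^{m}\|w\|_{m+1,T}.
\end{equation*}
Finally, since $\nabla\pi w\in[P_{k-1}(T)]^2$ and $\mathbb{Q}_h$ is the $L^2$-best approximation onto $[P_{k-1}(T)]^2$,
\begin{equation*}
\|\nabla w-\mathbb{Q}_h\nabla w\|_T\le\|\nabla w-\nabla\pi w\|_T\le Ch_T^{m}\|w\|_{m+1,T}.
\end{equation*}
Summing the three estimates gives (\ref{k2}).

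\emph{Main obstacle.} The only non-routine point is the inverse inequality $\|\nabla p\|_T\le Ch_T^{-1}\|p\|_T$ for polynomials on a \emph{curved} polygon, which is not merely the reference-simplex bound. It is, however, available through assumption (A1) (area comparable to $h_T^2$) together with the fact that $T$ lies in the shape-regular simplex $S(T)$ of assumption (A4): a standard norm-equivalence/scaling argument on $S(T)$ transfers the usual inverse inequality to $T$ with constants independent of $h_T$. Once this is in hand, every remaining step is routine Bramble--Hilbert/best-approximation bookkeeping.
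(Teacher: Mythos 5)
Your proof is correct. The paper states this lemma without proof---it is the standard $L^2$-projection estimate on shape-regular polytopal elements from \cite{wy,mwy}, and the circumscribed-simplex assumption (A4) (together with (A1), which underwrites the inverse inequality and the polynomial norm equivalence between $T$ and $S(T)$) was adopted precisely to enable the extension/Bramble--Hilbert/best-approximation argument you give, so your route is the intended one.
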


\begin{lemma} There holds
\begin{equation}\label{k3}
\|Q_0w-w\|_\pT\le Ch_T^{m+\frac12}\|w\|_{m+1,T},\forall w\in H^{m+1}(T),m\in[0,k],
\end{equation}
\begin{equation}
\|Q_bw-w\|_\pT\le Ch_T^{m-\frac12}\|w\|_{m,T},\forall w\in H^{m}(T),m\in[1,k].\label{k4}
\end{equation}
\end{lemma}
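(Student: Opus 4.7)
The plan is to prove both bounds by combining the trace inequality (\ref{trace}) with element-wise polynomial approximation. The estimate (\ref{k3}) for the interior projection $Q_0$ onto $P_k(T)$ is standard, while (\ref{k4}) requires a more delicate construction on curved edges where $V_b(e,k-1)$ is not a polynomial space on $e$ itself.

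For (\ref{k3}), I would apply (\ref{trace}) with $\varphi=w-Q_0w$ on each edge $e\subset\partial T$ and substitute the interior estimates from (\ref{k2}), so that both terms on the right-hand side contribute $Ch_T^{2m+1}\|w\|_{m+1,T}^2$; summing over the bounded number of edges of $T$ (by the shape-regularity assumptions (A1)--(A4)) then yields (\ref{k3}).

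For (\ref{k4}), I would first observe that $Q_b$ is in fact the standard (unweighted) $L^2(e)$ projection onto $V_b(e,k-1)$: the Jacobian weight in the definition of $\hQ_b$ on $\he$ cancels exactly with the change-of-variables factor between $e$ and $\he$, so the best-approximation inequality $\|Q_bw-w\|_e\le\|w-\chi\|_e$ holds for every $\chi\in V_b(e,k-1)$. On a straight edge, $V_b(e,k-1)=P_{k-1}(e)$, and I would take $\chi=p|_e$ for a Bramble--Hilbert polynomial $p\in P_{k-1}(T)$ satisfying $\|w-p\|_T+h_T\|\nabla(w-p)\|_T\le Ch_T^{m}\|w\|_{m,T}$; the trace inequality applied to $w-p$ then gives $\|w-p\|_e\le Ch_T^{m-1/2}\|w\|_{m,T}$, as desired. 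On a curved edge, I would pull back to $\he$ via $F_e$ and exploit the uniform bounds on the Jacobian from (\ref{EQ:mappingF}) to get
$$\|Q_bw-w\|_e^2\le C\inf_{\hat\chi\in P_{k-1}(\he)}\|\hat\chi-\hat w\|_{L^2(\he)}^2,\qquad \hat w=w\circ F_e,$$
so the task is reduced to approximating $\hat w$ by a polynomial on the straight reference edge $\he$ at rate $h_e^{m-1/2}$.

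The main obstacle is precisely this last step: the trace of $w\in H^m(T)$ on the curved edge $e$ lies only in the fractional space $H^{m-1/2}(e)$, so a naive Bramble--Hilbert argument on $\he$ would require fractional Sobolev norms. I plan to sidestep the fractional regularity by a two-dimensional lift. Using the pyramid $P(e,T,A_e)$ from assumption (A3) together with the smooth extension $F$ of $F_e$ to it, I would straighten $P(e,T,A_e)$ to a reference pyramid $\hat P$ with base $\he$, transport $w$ to $\tilde w=w\circ F\in H^m(\hat P)$ (with $\|\tilde w\|_{m,\hat P}\le C\|w\|_{m,T}$ by (\ref{EQ:mappingF})), approximate $\tilde w$ by a Bramble--Hilbert polynomial $\tilde p\in P_{k-1}(\hat P)$, and take $\hat\chi:=\tilde p|_{\he}\in P_{k-1}(\he)$. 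The trace inequality on $\hat P$ applied to $\tilde w-\tilde p$ then delivers $\|\hat w-\hat\chi\|_{L^2(\he)}^2\le Ch_T^{2m-1}\|w\|_{m,T}^2$, and summation over the edges of $T$ completes the proof. Uniformity of the constants across elements follows from (\ref{EQ:mappingF}) and the shape-regularity assumptions (A1)--(A4).
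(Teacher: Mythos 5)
Your proposal is correct and follows essentially the same route as the paper: the bound (\ref{k3}) via the trace inequality (\ref{trace}) combined with (\ref{k2}), and the bound (\ref{k4}) via pull-back to the reference edge $\he$ together with a two-dimensional lift to the straightened pyramid with base $\he$, Bramble--Hilbert approximation there, and a trace inequality. The only difference is one of detail: the paper simply asserts the key inequality $\int_\he(\hQ_b\hw-\hw)^2|J_e|\,d\hs\le Ch^{2m-1}\|\hw\|_{m,\hat T}^2$, whereas you spell out the underlying argument (including the useful observation that the Jacobian weight in $\hQ_b$ makes $Q_b$ the genuine $L^2(e)$-orthogonal projection onto $V_b(e,k-1)$), which correctly fills in what the paper leaves implicit.
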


\begin{proof}
From the trace inequality (\ref{trace}) and the estimate (\ref{k2}) we have
\begin{eqnarray*}
\|Q_0w-w\|_\pT
&\le&C(h_T^{-1}\|Q_0w-w\|_T^2+h_T\|\nabla(Q_0w-w)\|_T^2)^{1/2}\\
&\le&Ch_T^{m+\frac12}\|w\|_{m+1,T},
\end{eqnarray*}
which verifies (\ref{k3}).

Next, for any edge $e\subset\pT$, using the mapping $\bx=F_e(\hat{s})$ we arrive at
\begin{eqnarray*}
\|Q_bw-w\|_e^2=\int_e(Q_bw-w)^2de=\int_\he|(Q_b w-w)\circ F_e|^2|J_e|d{\hs},
\end{eqnarray*}
where $J_e$ is the Jacobian of the mapping. Note that
$$
\hw=w\circ F_e,\quad\hQ_b\hw=(Q_bw)\circ F_e,
$$
where $\hQ_b$ is the $|J_e|$-weighted $L^2(\he)$ projection onto the polynomial space of degree $\ell_k=k-1$ on $\he$. Thus, for $m\in[1,k]$ we have
\begin{equation}\label{hello.there}
\|Q_bw-w\|_e^2=\int_\he(\hQ_b\hw-\hw)^2|J_e|d{\hs}\leq Ch^{2m-1}\|\hw\|_{m,\hat T}^2,
\end{equation}
where $\hat T$ is the image of the pyramid $\he$ as its base. By mapping back to the element $T$, we obtain the desired estimate (\ref{k4}).
\end{proof}

\medskip

\begin{lemma} Assume that the curved finite element partition $\T_h$ is shape regular. For any $w\in H^{k+1}(\Omega)$ and $v=\{v_0,v_b\}\in W_h$, we have
\begin{eqnarray}
|s(Q_hw,v)|&\le&Ch^k\|w\|_{k+1}\3bar v\3bar,\label{mmm1}\\
\left|\ell_1(w,v)\right|&\leq&Ch^k\|w\|_{k+1}\3barv\3bar,\label{mmm2}\\
\left|\ell_2(w,v)\right|&\leq&Ch^k\|w\|_{k}\3barv\3bar.\label{mmm3}
\end{eqnarray}
\end{lemma}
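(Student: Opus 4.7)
The plan is to bound the three terms independently by combining Cauchy--Schwarz with the approximation estimates (\ref{k2})--(\ref{k4}) for $Q_0$, $Q_b$, and $\bbQ_h$, the trace inequalities (\ref{trace})--(\ref{trace2}), and the norm equivalence (\ref{happy.001new}).

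For (\ref{mmm1}), I would expand $s(Q_hw,v)=\sum_{T\in\T_h}h_T^{-1}\langle Q_bQ_0w-Q_bw,\,Q_bv_0-v_b\rangle_{\partial T}$ and apply a discrete Cauchy--Schwarz; the $v$-part is $\3bar v\3bar$ by definition. For the $w$-part, the key observation is the $L^2$-stability of $Q_b$ on each edge: because $\widehat Q_b$ is an orthogonal projection in the weighted space $L^2(\he,|J_e|)$ and push-forward by $F_e$ is an isometry from $L^2(\he,|J_e|)$ onto $L^2(e)$, one obtains $\|Q_b\varphi\|_e\le\|\varphi\|_e$. Applied to $\varphi=Q_0w-w$ and combined with (\ref{k3}) at $m=k$, this yields $h_T^{-1}\|Q_bQ_0w-Q_bw\|_{\partial T}^2\le Ch_T^{2k}\|w\|_{k+1,T}^2$, and summation produces (\ref{mmm1}).

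For (\ref{mmm2}), I would apply Cauchy--Schwarz in each boundary term. The trace inequality (\ref{trace}) together with (\ref{k2}) at $m=k$ (applied componentwise to $\nabla w$) gives $\|\nabla w-\bbQ_h\nabla w\|_{\partial T}\le Ch_T^{k-1/2}\|w\|_{k+1,T}$. For the remaining factor, (\ref{happy.000}) yields $\|v_0-v_b\|_{\partial T}^2\le Ch_T(\|\nabla v_0\|_T^2+h_T^{-1}\|Q_bv_0-v_b\|_{\partial T}^2)$, so that $\sum_T h_T^{-1}\|v_0-v_b\|_{\partial T}^2\le C\|v\|_{1,h}^2\le C\3bar v\3bar^2$ via (\ref{happy.001new}). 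A global Cauchy--Schwarz with the splitting $\|\nabla w-\bbQ_h\nabla w\|_{\partial T}\cdot\|v_0-v_b\|_{\partial T}=(h_T^{1/2}\|\nabla w-\bbQ_h\nabla w\|_{\partial T})(h_T^{-1/2}\|v_0-v_b\|_{\partial T})$ then gives (\ref{mmm2}).

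The only subtlety lies in (\ref{mmm3}). A naive Cauchy--Schwarz combined with the polynomial trace inequality (\ref{trace2}) on $\nabla_wv\cdot\bn$ wastes a half power of $h_T$ and only delivers $h^{k-1}$, which is off by one full order; recovering the sharp rate is precisely the purpose of Lemma~\ref{Lemma:Wonderful}. Since $\nabla_wv\in[P_{k-1}(T)]^2$, setting $\phi=w$ and $\bq=\nabla_wv$ in (\ref{EQ:July06-2016:600}) gives, on each edge $e\subset\partial T$,
$$|\langle Q_bw-w,\nabla_wv\cdot\bn\rangle_e|\le Ch_e^{1/2}\|Q_bw-w\|_{\partial T}\|\nabla_wv\|_T,$$
with the contribution from any straight edge vanishing outright. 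Combining this with (\ref{k4}) at $m=k$ and the bound $\bigl(\sum_T\|\nabla_wv\|_T^2\bigr)^{1/2}\le\3bar v\3bar$ (direct from $\3bar v\3bar^2=a(v,v)$), a discrete Cauchy--Schwarz over $T\in\T_h$ produces (\ref{mmm3}). This reliance on Lemma~\ref{Lemma:Wonderful}---which exploits the polynomial structure of $\bq$ after pull-back by $F_e$ together with the regularity bound (\ref{EQ:mappingF})---is the main obstacle, and it is also the reason the estimate can be stated with $\|w\|_k$ instead of $\|w\|_{k+1}$ while still carrying an $h^k$.
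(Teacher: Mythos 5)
Your proposal is correct and follows essentially the same route as the paper: $L^2$-boundedness of $Q_b$ together with (\ref{k3}) for the stabilizer term, Cauchy--Schwarz with (\ref{trace}), (\ref{k2}), (\ref{happy.000}) and the norm equivalence for $\ell_1$, and Lemma \ref{Lemma:Wonderful} combined with (\ref{k4}) at $m=k$ for $\ell_2$. Your remark that a naive trace-inequality bound on $\ell_2$ loses an order, and that the edge-wise superconvergence estimate (\ref{EQ:July06-2016:600}) is what restores the rate $h^k$ with only $\|w\|_k$, accurately identifies the one genuinely delicate point of the argument.
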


\begin{proof} To derive (\ref{mmm1}), we use the definition of $s(\cdot,\cdot)$, the Cauchy-Schwarz inequality, and the estimate (\ref{k3}) with $m=k$ to obtain
\begin{eqnarray*}
|s(Q_hw, v)|
&=&\left|\sum_{T\in\T_h}h_T^{-1}\langle Q_b(Q_0w)-Q_bw,\;Q_bv_0-v_b\rangle_\pT\right|\\
&=&\left|\sum_{T\in\T_h}h_T^{-1}\langle Q_b(Q_0w-w),\;Q_bv_0-v_b\rangle_\pT\right|\\
&\le&\left(\sum_{T\in\T_h}h_T^{-1}\|Q_0w-w\|^2_{\pT}\right)^{\frac12}
      \left(\sum_{T\in\T_h}h_T^{-1}\|Q_bv_0-v_b\|^2_{\pT}\right)^{\frac12}\\
&\le&Ch^k\|w\|_{k+1}\3barv\3bar.
\end{eqnarray*}

As to (\ref{mmm2}), from the Cauchy-Schwarz inequality, the trace inequality (\ref{trace}), and the estimate (\ref{k2}) we have
\begin{equation}\label{EQ:July7-2016:001}
\begin{split}
|\ell_1(w,v)|
=&\left|\sum_{T\in\T_h}\langle(\nabla w-\bbQ_h\nabla w)\cdot\bn,v_0-v_b\rangle_\pT\right|\\
\le&\left(\sum_{T\in\T_h} h_T\|\nabla w-\bbQ_h\nabla w\|_{\pT}^2\right)^{\frac12}
    \left(\sum_{T\in\T_h} h_T^{-1}\|v_0-v_b\|_\pT^2\right)^{\frac12}\\
\le&Ch^k\|w\|_{k+1}\left(\sum_{T\in\T_h}h_T^{-1}\|v_0-v_b\|_\pT^2\right)^{\frac12}.
\end{split}
\end{equation}
Note that (\ref{happy.000}) and (\ref{happy.001}) implies
$$
\sum_{T\in\T_h}h_T^{-1}\|v_0-v_b\|_\pT^2\leq C\3barv\3bar^2.
$$
Substituting the above into (\ref{EQ:July7-2016:001}) yields the estimate (\ref{mmm2}).

To establish (\ref{mmm3}), we use the estimate (\ref{EQ:July06-2016:600}) and Cauchy-Schwarz inequality to obtain
\begin{eqnarray*}
|\ell_2(w,v)|
&\leq&\sum_{T\in\T_h}\left|\l Q_bw-w,\nabla_wv\cdot\bn\r_\pT\right|\\
&\le&C\sum_{T\in\T_h}h_T^{\frac12}\|Q_bw-w\|_\pT\|\nabla_wv\|_T \\
&\le&Ch^{\frac12}\left(\sum_{T\in\T_h}\|Q_bw-w\|_\pT^2\right)^{\frac12}\left(\sum_{T\in\T_h}\|\nabla_wv\|_T^2\right)^{\frac12}.
\end{eqnarray*}
Now using (\ref{k4}) with $m=k$ we have the following estimate
\begin{eqnarray*}
|\ell_2(w,v)|\leq Ch^k\|w\|_k\3barv\3bar.
\end{eqnarray*}
This completes the proof of the lemma.
\end{proof}

\section{Error Estimates}\label{Section:H1ErrorEstimate}

With the help of the error equation (\ref{ee}) and the technical estimates presented in the previous section, we are ready to present some optimal order error estimates for the weak Galerkin finite element solution in discrete $H^1$-norm and $L^2$-norm.

\begin{theorem}\label{H1-errorestimate}
Let $u_h\in W_h$ be the weak Galerkin finite element solution of the problem (\ref{pde})-(\ref{bc}) arising from (\ref{wg}). Assume the exact solution $u\in H^{k+1}(\Omega)$ and meshsize $h$ is sufficiently small. Then, there exists a constant $C$ such that
\begin{equation}\label{err1}
\3baru_h-Q_hu\3bar\le Ch^k\|u\|_{k+1}.
\end{equation}
\end{theorem}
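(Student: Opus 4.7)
The plan is to test the error equation (\ref{ee}) with the choice $v = e_h$ itself, so that the left-hand side becomes $\3bar e_h \3bar^2$, and then bound each of the three terms on the right-hand side using the technical estimates (\ref{mmm1})--(\ref{mmm3}) already established in the previous section. This is the standard coercivity-plus-consistency argument for WG schemes; the heavy lifting (the curved-edge estimates packaged in Lemma \ref{Lemma:Wonderful} and the projection estimates) has already been done.

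First I would verify that the choice $v = e_h$ is admissible, which requires $e_h \in W_h^0$. Since the WG scheme enforces $u_b = Q_b g$ on $\partial\Omega$ and the exact boundary datum satisfies $u = g$ on $\partial\Omega$, one has $Q_b u = Q_b g = u_b$ on $\partial\Omega$, so that $e_b = 0$ on $\partial\Omega$ and hence $e_h \in W_h^0$. Plugging $v = e_h$ into the error equation and using the definition (\ref{3barnorm}) gives
$$
\3bar e_h \3bar^2 \;=\; a(e_h, e_h) \;=\; \ell_1(u, e_h) + \ell_2(u, e_h) + s(Q_h u, e_h).
$$

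Next I would apply (\ref{mmm1}), (\ref{mmm2}), and (\ref{mmm3}) with $w = u$ and $v = e_h$. Each term is bounded by $C h^k \|u\|_{k+1} \3bar e_h \3bar$ (noting that $\|u\|_k \le \|u\|_{k+1}$ absorbs the slightly weaker norm appearing in the $\ell_2$ bound), which yields
$$
\3bar e_h \3bar^2 \;\le\; C h^k \|u\|_{k+1} \, \3bar e_h \3bar.
$$
Dividing by $\3bar e_h \3bar$ (the case $\3bar e_h \3bar = 0$ being trivial) gives the desired estimate (\ref{err1}).

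There is no substantive obstacle left in this final step. The assumption that $h$ is sufficiently small is invoked only indirectly, through Lemma \ref{LEMMA:triplebarnorm}, to guarantee that $\3bar \cdot \3bar$ is a genuine norm on $W_h^0$ so that the final division is meaningful. The regularity assumption $u \in H^{k+1}(\Omega)$ is enough to apply all three of (\ref{mmm1})--(\ref{mmm3}); in particular the hypothesis $u \in H^1(\Omega) \cap H^{1+\gamma}(\Omega)$ with $\gamma > \tfrac12$ required in Theorem \ref{Theorem:error-equation} is subsumed since $k \ge 1$.
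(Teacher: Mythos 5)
Your proposal is correct and follows essentially the same route as the paper's proof: set $v=e_h$ in the error equation (\ref{ee}) to get $\3bar e_h\3bar^2=\ell_1(u,e_h)+\ell_2(u,e_h)+s(Q_hu,e_h)$, then apply the bounds (\ref{mmm1})--(\ref{mmm3}) and cancel one factor of $\3bar e_h\3bar$. The extra remarks you add (checking $e_b=0$ on $\partial\Omega$ so that $e_h\in W_h^0$, and noting where the smallness of $h$ enters) are sound and merely make explicit what the paper leaves implicit.
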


\begin{proof}
By letting $v=e_h$ in (\ref{ee}), we have
\begin{eqnarray*}
\3bare_h\3bar^2=\ell_1(u,e_h)+\ell_2(u,e_h)+s(Q_hu,e_h).
\end{eqnarray*}
It then follows from the estimates (\ref{mmm1})-(\ref{mmm3}) that\[\3bare_h\3bar^2\le Ch^k\|u\|_{k+1}\3bare_h\3bar,\] which implies
(\ref{err1}). This completes the proof of the theorem.
\end{proof}

\medskip

Using the norm equivalence (\ref{happy.001new}) and the error estimate (\ref{err1}) we immediately obtain
\begin{equation}\label{err1-H1}
\|u_h-Q_hu\|_{1,h}\le Ch^k\|u\|_{k+1}.
\end{equation}
Furthermore, with the straightforward extension of $\|\cdot\|_{1,h}$ to general weak functions we arrive at
\begin{equation}\label{err1-H1-new}
\|u_h-u\|_{1,h}\le Ch^k\|u\|_{k+1}.
\end{equation}

%\section{Error Estimate in $L^2$}\label{Section:L2ErrorEstimate}

Now we turn to deriving an optimal order error estimate for the weak Galerkin finite element approximation in the $L^2$ norm by following the usual duality argument. To this end, consider the dual problem which seeks $\Phi\in H_0^1(\Omega)$ satisfying
\begin{eqnarray}
-\Delta\Phi=e_0\quad \mbox{in}\;\Omega.\label{dual}
\end{eqnarray}
Recall that $e_0=Q_0u-u_0$ is the first component of the error function $e_h$. Assume that the dual problem (\ref{dual}) has $H^{2}$-regularity in the sense that there exists a constant $C$ such that
\begin{equation}\label{reg}
\|\Phi\|_2\le C\|e_0\|_0.
\end{equation}

Throughout the following estimates, we assume that all the interior edges of the curved finite element partition $\T_h$ are straight line segments. In other words, the curved edges only appear on the boundary of the domain. This assumption is practically feasible and computationally preferable. In addition, we shall consider only the finite element solution of order $k\ge2$, as no need is necessary for curved elements of lowest order $k=1$.

\begin{theorem}\label{L2-errorestimate}
Let $u_h\in W_h$ be the weak Galerkin finite element solution of the problem (\ref{pde})-(\ref{bc}) arising from (\ref{wg}) with order $k\ge2$. Assume that the exact solution of (\ref{pde})-(\ref{bc}) is sufficiently regular such that $u\in H^{k+1}(\Omega)$ and the meshsize $h$ is sufficiently small. Then there exists a constant $C$ such that
\begin{equation}\label{err2}
\|u-u_h\|\le Ch^{k+1}\|u\|_{k+1}.
\end{equation}
\end{theorem}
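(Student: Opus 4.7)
The plan is to carry out a duality (Aubin--Nitsche) argument. First, I test the dual problem (\ref{dual}) with $e_0$ and integrate by parts element-wise; since $\nabla\Phi\cdot\bn$ is single-valued across interior edges and $e_b$ vanishes on $\partial\Omega$, this yields
\[
\|e_0\|^2 = \sum_{T\in\T_h}(\nabla\Phi,\nabla e_0)_T - \sum_{T\in\T_h}\langle\nabla\Phi\cdot\bn,\,e_0-e_b\rangle_{\pT}.
\]
Applying Lemma \ref{Lemma:error-equation-prep} with $w=\Phi$ and $v=e_h$ converts this into
\[
\|e_0\|^2 = \sum_{T\in\T_h}(\nabla_w Q_h\Phi,\nabla_w e_h)_T - \ell_1(\Phi,e_h) - \ell_2(\Phi,e_h).
\]
Because $\Phi|_{\partial\Omega}=0$ we have $Q_h\Phi\in W_h^0$, so symmetry of $a(\cdot,\cdot)$ combined with the error equation (\ref{ee}) produces the representation
\[
\|e_0\|^2=\ell_1(u,Q_h\Phi)+\ell_2(u,Q_h\Phi)+s(Q_hu,Q_h\Phi)-s(Q_h\Phi,e_h)-\ell_1(\Phi,e_h)-\ell_2(\Phi,e_h).
\]

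The target is to bound each of the six resulting terms by $Ch^{k+1}\|u\|_{k+1}\|e_0\|$. The three ``dual-loaded'' terms $s(Q_h\Phi,e_h)$, $\ell_1(\Phi,e_h)$, $\ell_2(\Phi,e_h)$ are handled by repeating the arguments behind (\ref{mmm1})--(\ref{mmm3}), but now applying the projection estimates (\ref{k2})--(\ref{k4}) at regularity level $m=1$ or $m=2$ (adequate for $\Phi\in H^2$); each is bounded by $Ch\|\Phi\|_2\,\3bar e_h\3bar$, which in view of Theorem \ref{H1-errorestimate} and the regularity bound (\ref{reg}) delivers the required $Ch^{k+1}\|u\|_{k+1}\|e_0\|$. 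The two ``primal-loaded'' terms $s(Q_hu,Q_h\Phi)$ and $\ell_1(u,Q_h\Phi)$ are handled by Cauchy--Schwarz, pairing the primal-side estimate at $m=k$ with the dual-side estimate at $m=2$.

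The genuine obstacle is $\ell_2(u,Q_h\Phi)$, since the $k\ge 1$ branch of (\ref{EQ:July06-2016:600}) would only yield an $h^k$ bound. Two additional ingredients rescue the argument. First, the standing assumption that interior edges are straight forces the interior contributions to $\ell_2(u,Q_h\Phi)$ to vanish, because on a straight edge $Q_b$ is the ordinary $L^2$-projection onto $P_{k-1}(e)$ while $\nabla_w Q_h\Phi\cdot\bn\in P_{k-1}(e)$. Second, on a curved boundary edge $e\subset\partial\Omega$ one has $\Phi=0$ and hence $Q_b\Phi=0$; together with the just-noted cancellation on straight interior edges this forces, via (\ref{key}), the global identity $\nabla_w Q_h\Phi=\bbQ_h\nabla\Phi$. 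Since $k\ge 2$, I may invoke the sharper $k\ge 2$ branch of (\ref{EQ:July06-2016:600}) with $\bq=\bbQ_h\nabla\Phi$. The crucial point is to refrain from applying the inverse inequality to $\|\nabla\bq\|_T$, and instead to use the $H^1$-stability of the $L^2$ projection on shape-regular meshes, namely $\|\nabla\bbQ_h\nabla\Phi\|_T\le C\|\Phi\|_{2,T}$. Combined with $\|u-Q_bu\|_{\pT}\le Ch^{k-1/2}\|u\|_{k,T}$ from (\ref{k4}) and Cauchy--Schwarz over the curved boundary elements, this yields $|\ell_2(u,Q_h\Phi)|\le Ch^{k+1}\|u\|_{k+1}\|e_0\|$.

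Summing the six bounds and dividing through by $\|e_0\|$ gives $\|e_0\|\le Ch^{k+1}\|u\|_{k+1}$. The conclusion $\|u-u_h\|\le Ch^{k+1}\|u\|_{k+1}$ then follows from the triangle inequality together with the standard approximation bound $\|u-Q_0u\|\le Ch^{k+1}\|u\|_{k+1}$ implied by (\ref{k2}).
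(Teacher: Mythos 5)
Your proposal is correct and follows essentially the same duality argument as the paper: the same six-term decomposition, the same identity $\nabla_w Q_h\Phi=\bbQ_h\nabla\Phi$ derived from the straight interior edges and the vanishing boundary data of $\Phi$, and the same use of the $k\ge 2$ branch of Lemma \ref{Lemma:Wonderful} together with $\|\bbQ_h\nabla\Phi\|_{1,T}\le C\|\Phi\|_{2,T}$ for the critical term $\ell_2(u,Q_h\Phi)$. The only (inconsequential) deviation is your treatment of $\ell_1(u,Q_h\Phi)$ by direct Cauchy--Schwarz pairing of the projection estimates, where the paper instead first exploits orthogonality on the straight interior edges to replace $Q_b\Phi$ by $\Phi$; both yield the required $O(h^{k+1})$ bound.
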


\begin{proof}
By testing (\ref{dual}) against $e_0$ we obtain
\begin{eqnarray}\nonumber
\|e_0\|^2
&=&-(\Delta\Phi,e_0)\\
&=&\sum_{T\in\T_h}(\nabla\Phi,\ \nabla e_0)_T-\sum_{T\in\T_h}\l\nabla\Phi\cdot\bn,\ e_0-e_b\r_{\pT},\label{jw.08}
\end{eqnarray}
where we have used the fact that $e_b=0$ on $\partial\Omega$. Next, by setting $w=\Phi$ and $v=e_h$ in (\ref{April-5:888}) we arrive at
\begin{equation}\label{April-5:200}
\begin{split}
\sum_{T\in\T_h}(\nabla_wQ_h\Phi,\nabla_we_h)_T
=&\sum_{T\in\T_h}(\nabla\Phi,\nabla e_0)_T+\sum_{T\in\T_h}\l\nabla\Phi\cdot\bn,e_b-e_0\r_\pT \\
 &+\ell_1(\Phi,e_h)+\ell_2(\Phi,e_h).
\end{split}
\end{equation}
Substituting (\ref{April-5:200}) into (\ref{jw.08}) gives
\begin{eqnarray}
\|e_0\|^2=(\nabla_we_h,\ \nabla_wQ_h\Phi)-\ell_1(\Phi,e_h)-\ell_2(\Phi,e_h).\label{m2}
\end{eqnarray}
Now using the error equation (\ref{ee}) we have
\begin{eqnarray}
(\nabla_we_h,\ \nabla_w Q_h\Phi)
&=&\ell_1(u,Q_h\Phi)+\ell_2(u,Q_h\Phi)\nonumber\\
&&+s(Q_hu,\ Q_h\Phi)-s(e_h,\ Q_h\Phi).\label{m3}
\end{eqnarray}
Combining (\ref{m2}) with (\ref{m3}) yields
\begin{equation}\label{m4-new}
\begin{split}
\|e_0\|^2
=&\ell_1(u,Q_h\Phi)+\ell_2(u,Q_h\Phi)+s(Q_hu,\ Q_h\Phi)\\
 &-s(e_h,\ Q_h\Phi)-\ell_1(\Phi,e_h)-\ell_2(\Phi,e_h)\\
=&\sum_{j=1}^6I_j,
\end{split}
\end{equation}
where $I_j$ are defined accordingly. The rest of the proof shall deal with the terms $I_j (j=1,\cdots,6)$ one by one.

\medskip
\noindent{\bf Step 1:} \ Note that all the interior edges are straight line segments on which $Q_b$ is the usual $L^2$ projection onto $P_{k-1}(e)$. Thus, we have
$$
\langle(\nabla u-\bbQ_h\nabla u)\cdot\bn,\;\Phi-Q_b\Phi\rangle_{\pT\cap\E_h^0}=\langle\nabla u\cdot\bn,\;\Phi-Q_b\Phi\rangle_{\pT\cap\E_h^0},
$$
which, together with the fact that $\Phi=0$ and $Q_b\Phi=0$ on the boundary $\partial\Omega$, leads to
$$
\sum_{T\in\T_h}\langle(\nabla u-\bbQ_h\nabla u)\cdot\bn,\;\Phi-Q_b\Phi\rangle_{\pT}
=\sum_{T\in\T_h}\langle\nabla u\cdot\bn,\;\Phi-Q_b\Phi\rangle_{\pT}=0.
$$
It follows from Cauchy-Schwarz inequality that
\begin{equation}\label{EQ:step1}
\begin{split}
|I_1|=&|\ell_1(u,Q_h\Phi)|\\
=&\left|\sum_{T\in\T_h}\langle(\nabla u-\bbQ_h\nabla u)\cdot\bn,\;Q_0\Phi-Q_b\Phi\rangle_\pT\right|\\
=&\left|\sum_{T\in\T_h}\langle(\nabla u-\bbQ_h\nabla u)\cdot\bn,\; Q_0\Phi-\Phi\rangle_\pT \right|\\
\leq&\Big(\sum_{T\in\T_h}\|\nabla u-\bbQ_h\nabla u\|^2_\pT\Big)^{\frac{1}{2}}\Big(\sum_{T\in\T_h}\|Q_0\Phi-\Phi\|^2_\pT\Big)^{\frac{1}{2}}\\
\leq&Ch^{k+1}\|u\|_{k+1}\|\Phi\|_2,
\end{split}
\end{equation}
where we have used the trace inequality (\ref{trace}) and the estimates (\ref{k2}) in the last line.

\medskip
\noindent{\bf Step 2:} \ To bound $I_2=\ell_2(u,Q_h\Phi)$, we use the second estimate in (\ref{EQ:July06-2016:600}) to obtain
\begin{equation}\label{EQ:step2-pre}
\begin{split}
 &|\ell_2(u,Q_h\Phi)|\\
=&\left|\sum_{T\in\T_h}\l u-Q_bu,(\nabla_wQ_h\Phi)\cdot\bn\r_\pT\right|\\
\le&Ch^{3/2}\sum_{T\in\T_h}\|u-Q_bu\|_\pT(\|\nabla_wQ_h\Phi\|_{1,T}+\|\nabla_wQ_h\Phi\|_{T})\\
\le&Ch^{k+1}\|u\|_{k}\big\{\left(\sum_{T\in\T_h}\|\nabla_wQ_h\Phi\|_{1,T}^2\right)^{\frac12}
      +\left(\sum_{T\in\T_h}\|\nabla_wQ_h\Phi\|_{T}^2\right)^{\frac12}\big\},
\end{split}
\end{equation}
where we have employed Cauchy-Schwarz inequality and the estimate (\ref{k4}) with $m=k$ in the last inequality. By assumption, all the interior edges are straight line segments. Using this and the fact that $\Phi|_{\partial\Omega}=0$ and $Q_b\Phi|_{\partial\Omega}=0$ we can see that the boundary integral on the right-hand side of (\ref{key}) vanishes, and hence $\nabla_wQ_h\Phi=\bbQ_h\nabla\Phi$. It follows that
$$
\left(\sum_{T\in\T_h}\|\nabla_w Q_h\Phi\|_{1,T}^2\right)^{\frac12}
=\left(\sum_{T\in\T_h}\|\bbQ_h\nabla\Phi\|_{1,T}^2\right)^{\frac12}\leq C\|\Phi\|_2.
$$
Similarly,
$$
\left(\sum_{T\in\T_h}\|\nabla_w Q_h\Phi\|_{T}^2\right)^{\frac12}=\left(\sum_{T\in\T_h}\|\bbQ_h\nabla\Phi\|_{T}^2\right)^{\frac12}\leq C\|\Phi\|_2.
$$
Substituting the above two estimates into (\ref{EQ:step2-pre}) yields
\begin{equation}\label{EQ:step2}
|\ell_2(u,Q_h\Phi)|\leq Ch^{k+1}\|u\|_{k}\|\Phi\|_2.
\end{equation}

\medskip
\noindent{\bf Step 3:} \ As to the third term $I_3$, we use Cauchy-Schwarz inequality, the $L^2$-boundedness of $Q_b$ and the estimate (\ref{k3}) to obtain
\begin{equation}\label{EQ:step3}
\begin{split}
\left|s(Q_hu,\; Q_h\Phi)\right|
\le&\sum_{T\in\T_h}h_T^{-1}\left|\l Q_bQ_0u-Q_bu,\ Q_bQ_0\Phi-Q_b\Phi\r_\pT\right|\\
\le&\left(\sum_{T\in\T_h}h_T^{-2}\|Q_0u-u\|^2_\pT\right)^{\frac12}\left(\sum_{T\in\T_h}\|Q_0\Phi-\Phi\|^2_\pT\right)^{\frac12} \\
\le&Ch^{k+1}\|u\|_{k+1}\|\Phi\|_2.
\end{split}
\end{equation}

\medskip
\noindent{\bf Step 4:} \ For the term $|I_4|=|s(e_h,\ Q_h\Phi)|$, from the estimate (\ref{mmm1}) (with $k=1$ and $w=\Phi$) we obtain
\begin{eqnarray}\label{EQ:step4}
|s(e_h,\ Q_h\Phi)|\le Ch\3bare_h\3bar\|\Phi\|_2.
\end{eqnarray}
As to the term $|I_5|=|\ell_1(\Phi,e_h)|$, it follows from (\ref{mmm2}) with $w=\Phi$ and $k=1$ that
\begin{eqnarray}\label{EQ:step5}
|\ell_1(\Phi,e_h)|\le Ch\3bare_h\3bar\|\Phi\|_2.
\end{eqnarray}
Finally, for the term $|I_6|=|\ell_2(\Phi,e_h)|$, we use the estimate (\ref{mmm3}) with $w=\Phi$ and $k=1$ to get
\begin{eqnarray}\label{EQ:step6}
|\ell_2(\Phi,e_h)|\le Ch\3bare_h\3bar\|\Phi\|_2.
\end{eqnarray}

Substituting the estimates (\ref{EQ:step1}) and (\ref{EQ:step2})-(\ref{EQ:step6}) into (\ref{m4-new}) yields
$$
\|e_0\|^2\leq C(h^{k+1}\|u\|_{k+1}+h\3bar e_h\3bar)\|\Phi\|_2,
$$
which, combined with the regularity assumption (\ref{reg}) and the error estimate (\ref{err1}), gives the optimal order error estimate
(\ref{err2}).
\end{proof}

\medskip

For any weak finite element function $v=\{v_0,v_b\}\in W_h$, we define the following semi-norm
\begin{equation*}\label{ebnorm}
\|v_b\|_{\E_h}=\left(\sum_{T\in{\cal T}_h}h_T\|v_b\|^2_{\partial T}\right)^{\frac{1}{2}}.
\end{equation*}
By combining the $L^2$ error estimate (\ref{err2}) with the $H^1$ error estimate (\ref{err1}), one can derive the following $L^2$ error estimate for the WG approximation on the boundary of each element. Details of the proof are left to interested readers as an exercise.

\begin{theorem}
Let $u_h\in W_h$ be the solution of the weak Galerkin Algorithm (\ref{wg}) with finite elements of order $k\geq2$. Assume that the exact solution $u$ of (\ref{pde})-(\ref{bc}) is sufficiently regular such that $u\in H^{k+1}(\Omega)$. Assume that curved edges in the finite element partition can only appear on the boundary of the domain. There exists a constant $C$ such that
\begin{equation*}\label{err3}
\|Q_bu-u_b\|_{\E_h}\leq Ch^{k+1}\|u\|_{k+1}.
\end{equation*}
\end{theorem}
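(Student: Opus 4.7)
The plan is to combine the two already-established error estimates—the discrete $H^1$ bound (\ref{err1-H1}) and the $L^2$ bound (\ref{err2})—by decomposing $e_b = Q_b u - u_b$ on each element boundary via a triangle inequality.

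First I would split
\[
e_b \;=\; Q_b e_0 \;-\; (Q_b e_0 - e_b)
\]
on each $\pT$, which gives
\[
\|e_b\|_{\E_h}^2 \;\le\; 2\sum_{T\in\T_h} h_T \|Q_b e_0\|_\pT^2 \;+\; 2\sum_{T\in\T_h} h_T \|Q_b e_0 - e_b\|_\pT^2.
\]
The second sum is the easy piece: since $h_T \le h$, it is bounded by $h^2 \sum_{T\in\T_h} h_T^{-1}\|Q_b e_0 - e_b\|_\pT^2 \le h^2 \|e_h\|_{1,h}^2$, and (\ref{err1-H1}) upgrades this to $Ch^{2k+2}\|u\|_{k+1}^2$.

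For the first sum I would use that $Q_b$ is the ordinary $L^2$-projection onto $P_{k-1}(e)$ on straight edges, and on curved edges is the pullback of the $|J_e|$-weighted projection $\hQ_b$ on the reference edge $\he$; in either case $\|Q_b g\|_e \le \|g\|_e$, so $\|Q_b e_0\|_\pT \le \|e_0\|_\pT$. Since $e_0 \in P_k(T)\subset H^1(T)$, the trace inequality (\ref{trace}) yields $h_T \|e_0\|_\pT^2 \le C(\|e_0\|_T^2 + h_T^2 \|\nabla e_0\|_T^2)$. Summing over $T$ and applying the $L^2$ estimate (\ref{err2}) to bound $\|e_0\|$, together with $\sum_{T\in\T_h} \|\nabla e_0\|_T^2 \le \|e_h\|_{1,h}^2 \le Ch^{2k}\|u\|_{k+1}^2$ from (\ref{err1-H1}), produces
\[
\sum_{T\in\T_h} h_T \|e_0\|_\pT^2 \;\le\; C\bigl(\|e_0\|^2 + h^2 \|e_h\|_{1,h}^2\bigr) \;\le\; C h^{2k+2}\|u\|_{k+1}^2.
\]
Combining the two pieces gives $\|e_b\|_{\E_h}^2 \le Ch^{2k+2}\|u\|_{k+1}^2$, as claimed.

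The hard part, such as it is, will be verifying the $L^2$-boundedness of $Q_b$ on a curved edge, since $Q_b$ is not a genuine $L^2$-projection on $e$ but rather the pullback of a weighted projection on $\he$. A change of variables $\bx = F_e(\hs)$ reduces the question to the contractivity of $\hQ_b$ in the $|J_e|$-weighted inner product, which is immediate from the fact that $\hQ_b$ is an orthogonal projection in that Hilbert space. Once this is in hand the remaining steps are routine; the extra factor $h_T$ built into the skeleton norm $\|\cdot\|_{\E_h}$ is precisely what promotes the $O(h^k)$ stabilizer bound to the optimal $O(h^{k+1})$ order on the edges.
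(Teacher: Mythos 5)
Your proof is correct and follows exactly the route the paper indicates: the paper states this theorem with only the remark that it follows ``by combining the $L^2$ error estimate (\ref{err2}) with the $H^1$ error estimate (\ref{err1})'' and leaves the details as an exercise. Your decomposition $e_b=Q_be_0-(Q_be_0-e_b)$, the contractivity of $Q_b$ (trivial on the straight interior edges and verified via the weighted projection $\hQ_b$ on curved boundary edges, where in fact $e_b=0$ anyway since $u_b=Q_bg$), and the trace inequality supply precisely those details, so nothing is missing.
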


\section{Numerical Integration  on a Curved Polygon}\label{Section:NC} 
Let $T$ be a curved element. For simplicity of implementation, assume that the boundary $\pa T$ of the curved element $T$ consists of one curved edge $e_1$ and  the rest edges $e_i (i=2,\ldots,N_E)$ being straight edges. For any given function $F(x, y)$ defined on the curved element $T$, it follows from the Taylor expansion that
\begin{equation*}
\begin{split}
F(x,y)=&F(x_T,y_T)+\pa_xF(x_T,y_T)\cdot(x-x_T)+\pa_yF(x_T,y_T)\cdot(y-y_T)\\
       &+\cdots+\frac{\Big((x-x_T)\pa_x+(y-y_T)\pa_y\Big)^kF(x_T,y_T)}{k!}+O(h^{k+1}),
\end{split}
\end{equation*}
where $(x_T,y_T)$ is a given point on the curved element $T$. Thus, one arrives at
\begin{equation}\label{NC:Jan-1}
\begin{split}
&\int_TF(x,y)dT
= F(x_T,y_T)\int_T1dT\\&+\pa_xF(x_T,y_T)\int_T(x-x_T)dT+\pa_yF(x_T,y_T)\int_T(y-y_T)dT\\
 &+\cdots+\frac{\int_T\Big((x-x_T)\pa_x+(y-y_T)\pa_y\Big)^kF(x_T,y_T)dT}{k!}+{\cal O}(h^{k+1}).
\end{split}
\end{equation}
The number of terms you shall expand for the right hand side of  \eqref{NC:Jan-1} depends on the desired approximation accuracy.

For simplicity, we shall consider the first three terms on the right hand of \eqref{NC:Jan-1} to calculate the approximation of the integral $\int_TF(x,y)dT$ with the  approximation error ${\cal O}(h^2)$.  
As to the first integral $\int_T1dT$, there exists a vector-valued function ${\bf f_1}(x,y)$ such that $\nabla\cdot{\bf f_1}(x,y)=1$. Then, it follows from the divergence theorem that
\begin{equation}\label{NC:Jan-2}
\begin{split}
\int_T1dT=&\int_{T}\nabla\cdot{\bf f_1}(x,y)dT 
         = \int_{\pa T}{\bf f_1}(x,y)\cdot\bn ds\\
         =&\int_{e_1}{\bf f_1}(x,y)\cdot\bn_1ds+\sum_{i=2}^{N_E}\int_{e_i}{\bf f_1}(x,y)\cdot\bn_ids,
\end{split}
\end{equation}
where $\bn_i$ represents the unit outward normal direction to edge $e_i$ for $i=1,\ldots,N_E$.

Similarly, there exist two functions ${\bf f_2}(x,y)$ and ${\bf f_3}(x,y)$ such that $\nabla\cdot{\bf f_2}(x,y)=x-x_T$ and $\nabla\cdot{\bf f_3}(x,y)=y-y_T$. Thus, we arrive at
\begin{equation}\label{NC:Jan-3}
\begin{split}
\int_T(x-x_T)dT=&\int_{e_1}{\bf f_2}(x,y)\cdot\bn_1 ds+\sum_{i=2}^{N_E}\int_{e_i}{\bf f_2}(x,y)\cdot\bn_i ds,
\end{split}
\end{equation}
\begin{equation}\label{NC:Jan-4}
\int_T(y-y_T)dT=\int_{e_1}{\bf f_3}(x,y)\cdot\bn_1 ds+\sum_{i=2}^{N_E}\int_{e_i}{\bf f_3}(x,y)\cdot\bn_i ds.
\end{equation}

Recall that the parametric presentation for $e_i$ is given by $(x,y)=(\phi(\hat{s}),\psi(\hat{s}))$. For any given point $(x,y)\in e_i$, there exists a vector $\pmb{\beta}$ starting from the given point to a point in the interior of the element $T$. Then one arrives at the unit outward normal direction to $e_i$ given by   $$\bn_i=\frac{\alpha(\frac{d(\psi(d\hs))}{d\hs};-\frac{d(\phi(d\hs))}{d\hs})}{\sqrt{(\frac{d(\psi(d\hs))}{d\hs})^2+(\frac{d(\phi(d\hs))}{d\hs})^2   }}$$   where the coefficient $\alpha$ is set by
 \begin{equation*}
\begin{split}
& \alpha=\begin{cases}1,& \mbox{if}~~(\frac{d(\psi(d\hs))}{d\hs};-\frac{d(\phi(d\hs))}{d\hs})\cdot\pmb{\beta}<0,\\
  -1,& \mbox{otherwise.}\end{cases}
\end{split}
\end{equation*} 

Recall that the mapping $\hat{F_e}$ maps a curved edge $e$ to a straight edge $\hat{e}$. Then, for any $v_b$, $w_b\in V_b(e,k-1)$, we have
\begin{equation}\label{curveintegral}
\begin{split}
\int_ev_bw_bds=&\int_e\hat{v}_b\hat{w}_b\circ\hat{F_e}de\\
              =&\int_{\hat{e}}\hat{v}_b\hat{w}_b\cdot |J_e|d{\hs}\\
              =&\int_{\hat{e}}\hat{v}_b\hat{w}_b\sqrt{(\phi'(\hat{s}))^2+(\psi'(\hat{s}))^2}d\hs,
\end{split}
\end{equation}
which can be computed by using numerical integration with desired  precision.

Substituting \eqref{NC:Jan-2}-\eqref{NC:Jan-4} into \eqref{NC:Jan-1} gives rise to an approximation for the integral $\int_TF(x,y)dT$ with the  approximation error ${\cal O}(h^2)$, which will be further calculated by \eqref{curveintegral} and numerical integration with required precision.

\section{Numerical Experiments}\label{Section:NE}
This section shall illustrate several numerical experiments to demonstrate the accuracy and efficiency of the curved elements in WG methods. For simplicity of implementation, we consider two types of WG element. One is called a curved WG element where one edge is curved and the rest edges are straight; the other is called a straight WG element where all edges are straight. 
The curved WG element with degree $k$ and the discrete weak gradient discretized by $[P_{k-1}(T)]^2$ is denoted by $P_k(T)-V_b(\pa T,k-1)-[P_{k-1}(T)]^2$ element. Analogously, the straight WG element with degree $k$ is denoted by $P_k(T)-P_{k-1}(\pa T)-[P_{k-1}(T)]^2$ element. 

{\bf Test case 1 (curved quadrilateral domain)} We consider a  curved quadrilateral domain given by 
$$\O=\{\{x,y\}:~~0\leq x\leq1,~~g_1(x)\leq y\leq g_2(x)\},$$
where $g_1(x)=\frac{1}{20}\sin(\pi x)$ and $g_2(x)=1+\frac{1}{20}\sin(3\pi x)$. The exact solution is $u=x(x-1)(y-g_1(x))(y-g_2(x))$ shown in Figure \ref{Pictu-example1} (a). The finite element partition on the curved domain $\O$ is constructed such that the mesh node  $(x_{\O},y_{\O})$ is  given by \cite{LM2021} 
 \begin{equation*}
\begin{split}
& (x_{\O},y_{\O})=\begin{cases}(x_s,y_s+g_1(x_s)(1-2y_s)),& \mbox{if}~~y_s\leq \frac{1}{2},\\
  (x_s,1-y_s+g_2(x_s)(2y_s-1),& \mbox{otherwise,}\end{cases}
\end{split}
\end{equation*} 
where $(x_s,y_s)$ is the mesh point obtained by uniformly dividing the unit square domain $[0,1]^2$ into $n\times n$ sub-squares. The numerical tests are implemented on the curved uniform meshes and the straight uniform meshes respectively. The curved uniform meshes and straight uniform meshes are obtained by connecting the mesh nodes on the  curved boundary edges $y=g_1(x)$ and $y=g_2(x)$ where $0\leq x \leq 1$ by curved segments and straight segments, respectively, while the interior mesh nodes are both connected by straight edges. The first level of straight uniform meshes is shown in Figure \ref{Pictu-example1-1} (Left). The second level of the straight uniform meshes is refined by connecting the midpoints of the quadrilateral elements on the first level ending up with  dividing each quadrilateral element on the first level into $4$ sub quadrilateral elements as shown in Figure \ref{Pictu-example1-1} (Right). Similarly, the first two levels of curved uniform meshes are shown in Figure \ref{Pictu-example1-8}.

\begin{figure}[!htbp]
\centering
\includegraphics[height=0.36\textwidth,width=0.36\textwidth]{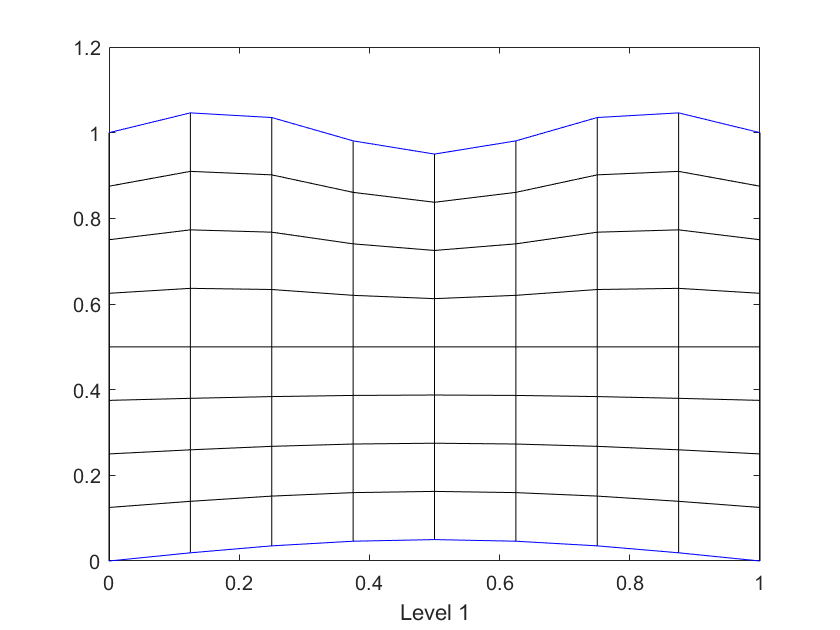}
\includegraphics[height=0.36\textwidth,width=0.36\textwidth]{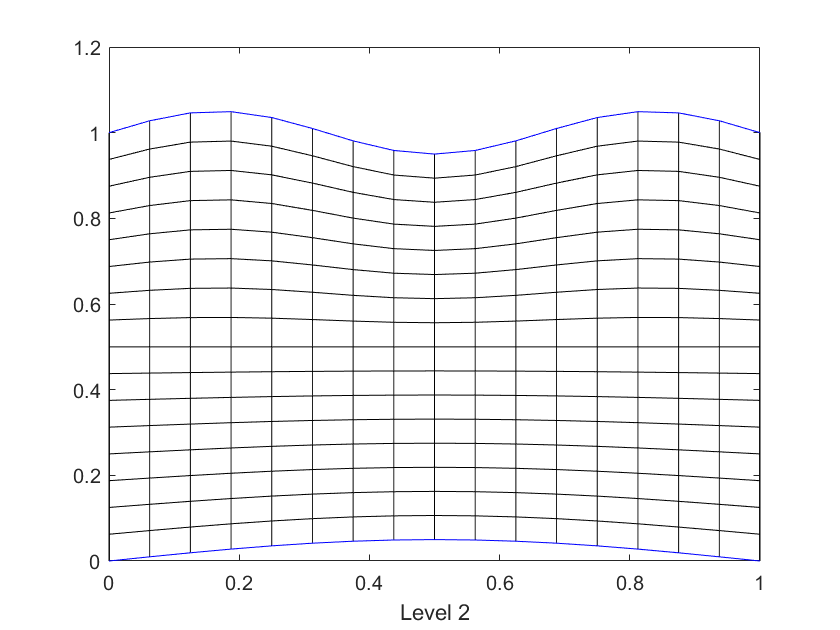}
\caption{Level 1 (Left) and level 2 (Right) of straight uniform meshes in test case 1.}\label{Pictu-example1-1}
\end{figure}

\begin{figure}[!htbp]
\centering
\includegraphics[height=0.36\textwidth,width=0.36\textwidth]{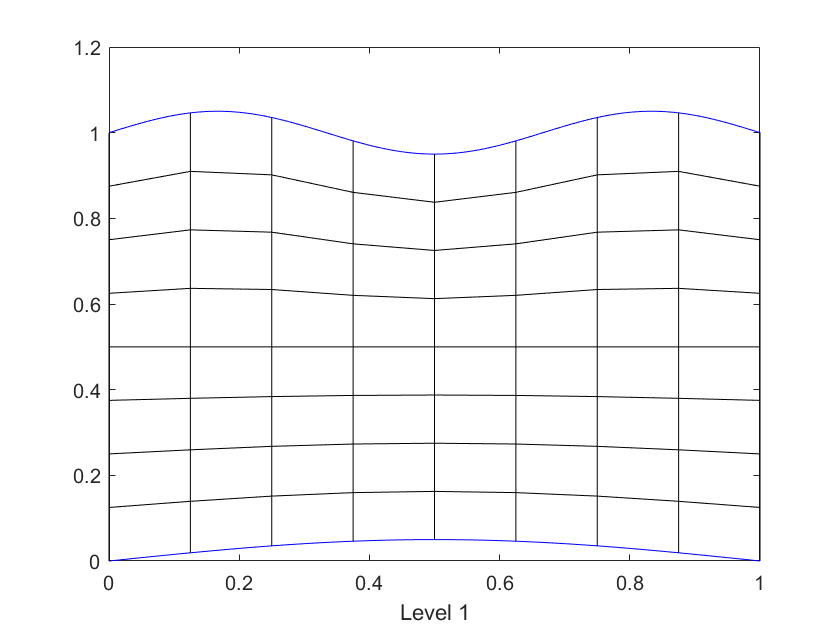}
\includegraphics[height=0.36\textwidth,width=0.36\textwidth]{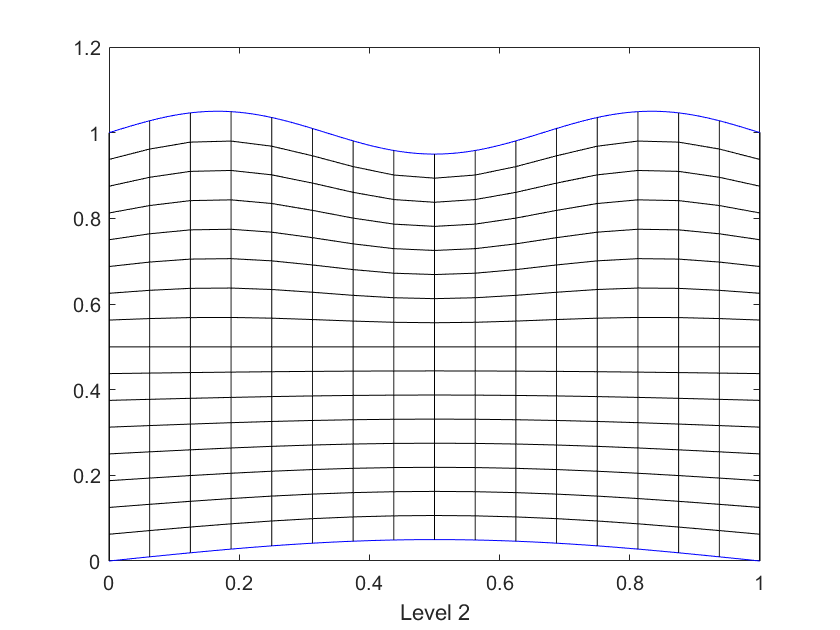}
\caption{Level 1 (Left) and level 2 (Right) of curved uniform meshes in test case 1.}\label{Pictu-example1-8}
\end{figure}

\begin{figure}[!htbp]
\centering
\includegraphics[height=0.40\textwidth,width=0.42\textwidth]{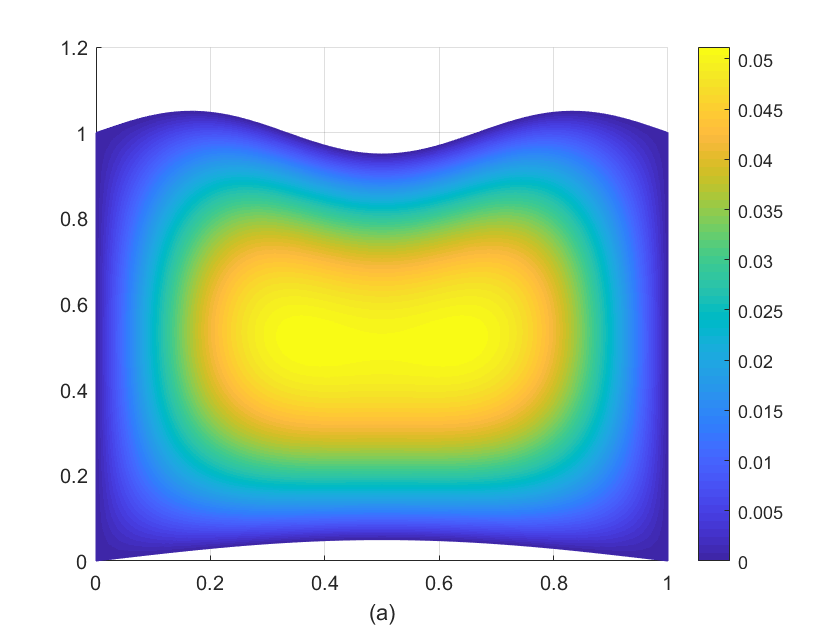}
\includegraphics[height=0.40\textwidth,width=0.42\textwidth]{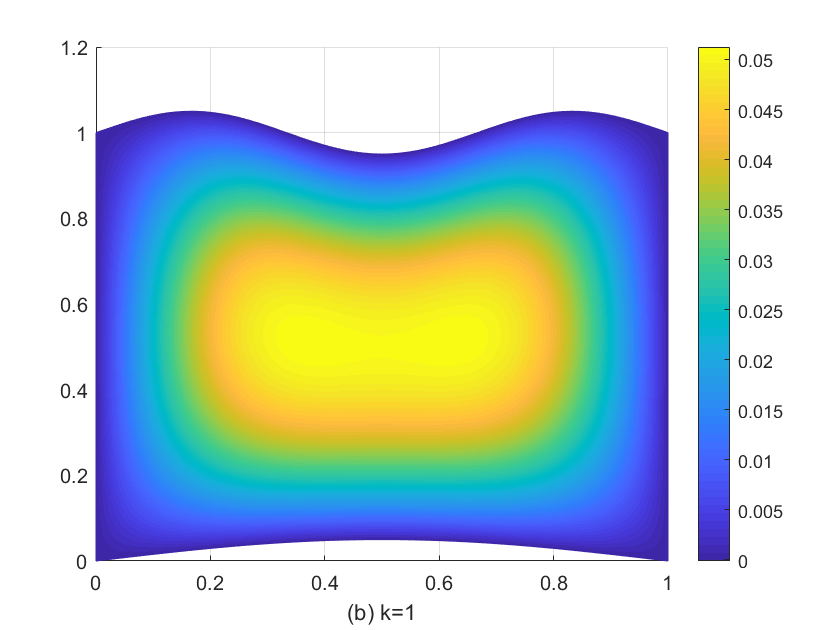}
\includegraphics[height=0.40\textwidth,width=0.42\textwidth]{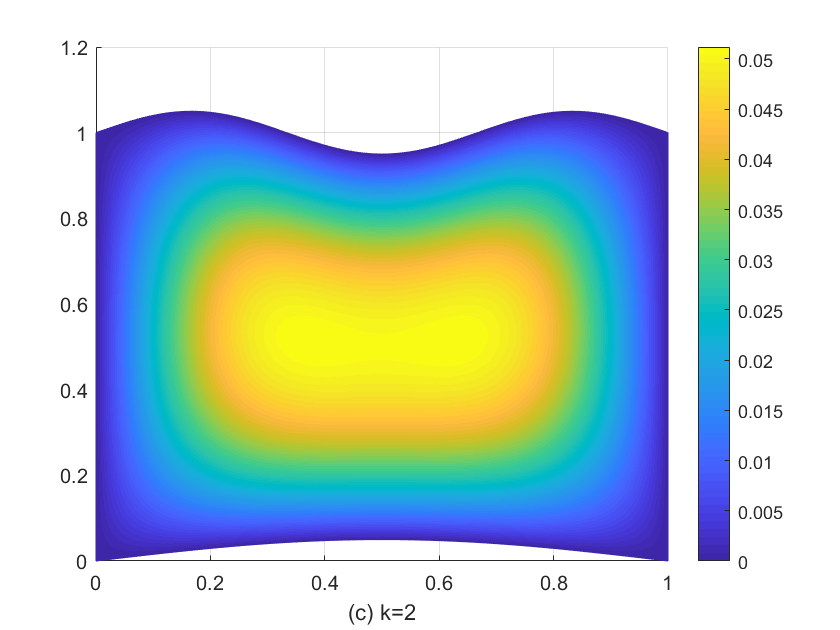}
\includegraphics[height=0.40\textwidth,width=0.42\textwidth]{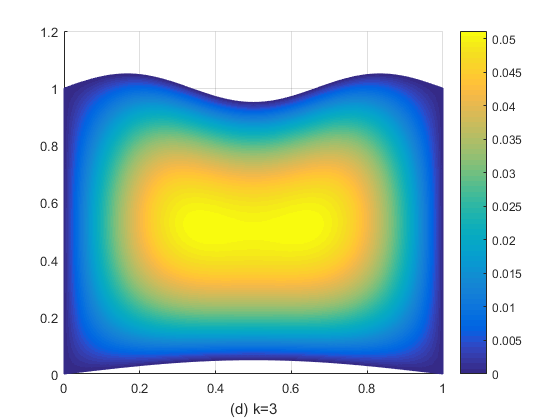}
\caption{(a) the exact solution $u$, (b) WG solution for  $k=1$, (c) WG solution for   $k=2$, (d) WG solution for  $k=3$.}\label{Pictu-example1}
\end{figure}

 We have observed from Table \ref{NE:exampe-1} that the optimal order of convergence for the numerical approximation on both the curved uniform meshes and the straight uniform meshes when the lowest order WG element $k=1$ is employed, which is consist with what the theory predicts; in addition, the convergence order of the WG numerical approximation in various norms on the curved uniform meshes is better than that on the straight uniform meshes for the higher order WG elements $k=2$ and $k=3$ respectively. The WG numerical solution $u_0$ on the curved uniform meshes for $k=1, 2, 3$ are illustrated in Figure \ref{Pictu-example1}.

\begin{table}[h]
\caption{Test case 1: Numerical errors and corresponding convergence rates.}\label{NE:exampe-1}
\begin{tabular}{|p{0.4cm}p{1.28cm}p{0.9cm}p{1.28cm}p{0.9cm}p{1.28cm}p{0.9cm}p{1.28cm}p{0.6cm}|}
\hline
1/h&$\3bare_h\3bar$&Rate&$\|e_0\|$&Rate&$\|e_b\|_{\E_h}$&Rate&$\|\nabla e_0\|$&Rate\\
\hline
&\text{$P_1(T)-V_b(\pa T,0)-[P_0(T)]^2$~element~on~the~curved~uniform~meshes}&&&&&&&\\
\hline
8           &5.08e-02  &-       &3.06e-03  &-       &3.25e-03  &-       &1.85e-02  &-\\
16          &2.82e-02  &0.85    &7.94e-04  &1.94    &9.15e-04  &1.83    &5.86e-03  &1.66\\
32          &1.47e-02  &0.94    &2.02e-04  &1.98    &2.38e-04  &1.94    &2.10e-03  &1.48\\
64          &7.52e-03  &0.97    &5.07e-05  &1.99    &6.02e-05  &1.98    &8.55e-04  &1.30\\
128         &3.79e-03  &0.99    &1.27e-05  &2.00    &1.51e-05  &2.00    &3.85e-04  &1.15 \\
\hline
&\text{$P_1(T)-P_0(\pa T)-[P_0(T)]^2$~element~on~the~straight~uniform~meshes}&&&&&&&\\
\hline
8           &5.08e-02  &-       &3.04e-03  &-       &3.22e-03  &-       &1.87e-02  &-\\
16          &2.85e-02  &0.84    &8.02e-04  &1.92    &9.27e-04  &1.79    &5.99e-03  &1.64\\
32          &1.50e-02  &0.93    &2.05e-04  &1.97    &2.43e-04  &1.93    &2.14e-03  &1.49\\
64          &7.64e-03  &0.97    &5.15e-05  &1.99    &6.17e-05  &1.98    &8.62e-04  &1.31\\
128         &3.86e-03  &0.99    &1.29e-05  &2.00    &1.55e-05  &1.99    &3.87e-04  &1.16 \\
\hline
&\text{$P_2(T)-V_b(\pa T,1)-[P_1(T)]^2$~element~on~the~curved~uniform~meshes}&&&&&&&\\
\hline
8           &1.24e-02  &-       &3.43e-04  &-       &1.01e-03  &-       &9.41e-03  &-\\
16          &3.24e-03  &1.94    &4.16e-05  &3.04    &1.43e-04  &2.82    &2.30e-03  &2.04\\
32          &8.37e-04  &1.96    &5.19e-06  &3.00    &1.90e-05  &2.92    &5.75e-04  &2.00\\
64          &2.12e-04  &1.98    &6.51e-07  &3.00    &2.44e-06  &2.96    &1.44e-04  &2.00\\
128         &5.35e-05  &1.99    &8.15e-08  &3.00    &3.08e-07  &2.98    &3.61e-05  &2.00 \\
\hline
&\text{$P_2(T)-P_1(\pa T)-[P_1(T)]^2$~element~on~the~straight~uniform~meshes}&&&&&&&\\
\hline
8           &1.22e-02  &-       &3.73e-04  &-       &1.04e-03  &-       &8.88e-03  &-\\
16          &3.42e-03  &1.83    &6.25e-05  &2.58    &1.66e-04  &2.65    &2.16e-03  &2.04\\
32          &1.10e-03  &1.64    &1.32e-05  &2.24    &3.00e-05  &2.47    &5.38e-04  &2.00\\
64          &4.38e-04  &1.33    &3.13e-06  &2.08    &6.51e-06  &2.21    &1.35e-04  &2.00\\
128         &2.03e-04  &1.11    &7.70e-07  &2.02    &1.56e-06  &2.06    &3.37e-05  &2.00\\
\hline
&\text{$P_3(T)-V_b(\pa T,2)-[P_2(T)]^2$~element~on~the~curved~uniform~meshes}&&&&&&&\\
\hline
4           &1.05e-02  &-       &7.18e-04  &-       &3.93e-04  &-       &1.04e-02  &-\\
8           &1.76e-03  &2.57    &7.19e-05  &3.32    &5.98e-05  &2.71    &1.70e-03  &2.61\\
16          &2.29e-04  &2.94    &4.67e-06  &3.94    &4.38e-06  &3.77    &2.18e-04  &2.96\\
32          &2.93e-05  &2.97    &3.02e-07  &3.95    &2.99e-07  &3.87    &2.77e-05  &2.98\\
64          &3.74e-06  &2.97    &2.03e-08  &3.89    &2.12e-08  &3.82    &3.49e-06  &2.99\\
\hline
&\text{$P_3(T)-P_2(\pa T)-[P_2(T)]^2$~element~on~the~straight~uniform~meshes}&&&&&&&\\
\hline
4           &1.03e-02  &-       &7.63e-04  &-       &5.50e-04  &-       &1.00e-02  &-\\
8           &3.12e-03  &1.72    &1.98e-04  &1.95    &2.01e-04  &1.45    &2.67e-03  &1.91\\
16          &1.15e-03  &1.44    &5.05e-05  &1.97    &5.63e-05  &1.83    &7.12e-04  &1.91\\
32          &5.17e-04  &1.15    &1.26e-05  &2.01    &1.49e-05  &1.92    &2.01e-04  &1.83\\
64          &2.52e-04  &1.04    &3.11e-06  &2.01    &3.83e-06  &1.96    &5.97e-05  &1.75\\
\hline
\end{tabular}
\end{table}

{\bf Test case 2 (circular domain)} Here is the configuration of the test: the exact solution is $u=-(x^2+y^2-1)$; the domain is an unit circle $\O=\{\{x,y\}:x^2+y^2\leq1\}$; the curved WG element $P_2(T)-V_b(\pa T,1)-[P_1(T)]^2$ is used; and the curved uniform meshes on levels 1 \& 2 are shown in Figure \ref{Pictu-example1-9}. The exact solution $u$ and WG solution $u_0$ are plotted in Figure \ref{Pictu-example1-2}. As we can see from Table \ref{NE:exampe-2}, the error of the WG appriximation  in various norms on curved uniform meshes achieves an optimal order of convergence, which  consists with our theory.
\begin{figure}[!htbp]
\centering
\includegraphics[height=0.36\textwidth,width=0.36\textwidth]{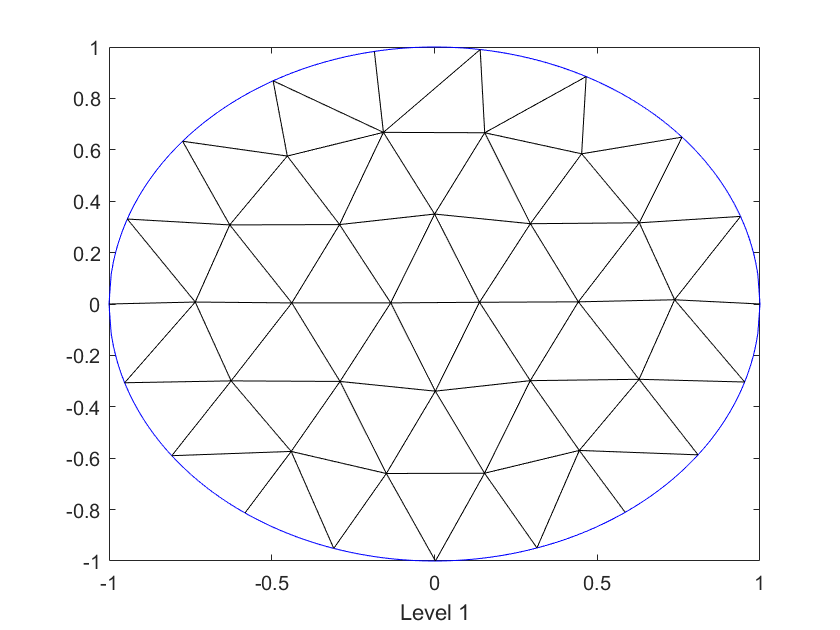}
\includegraphics[height=0.36\textwidth,width=0.36\textwidth]{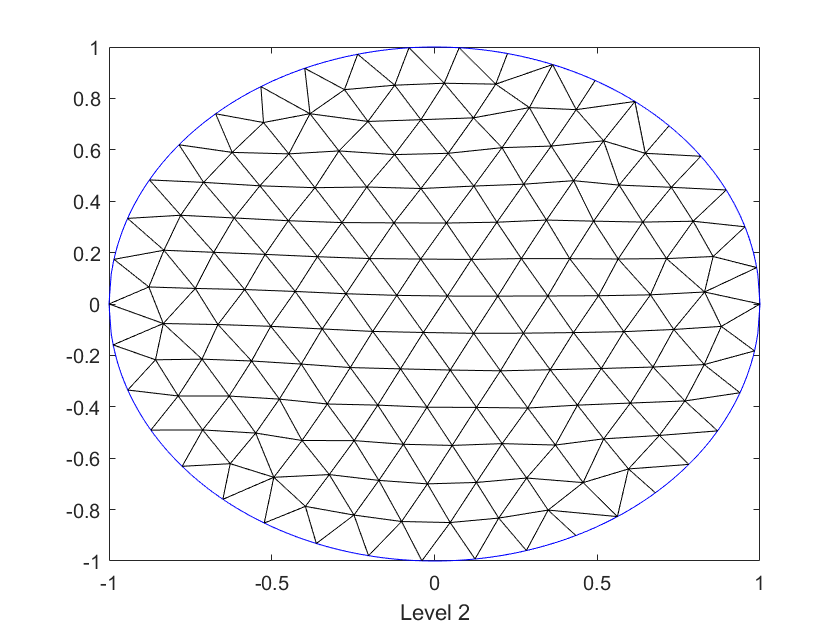}
\caption{Level 1 (Left) and level 2 (Right) of curved uniform meshes in test case 2.}\label{Pictu-example1-9}
\end{figure}

\begin{figure}
\centering
\includegraphics[height=0.395\textwidth,width=0.53\textwidth]{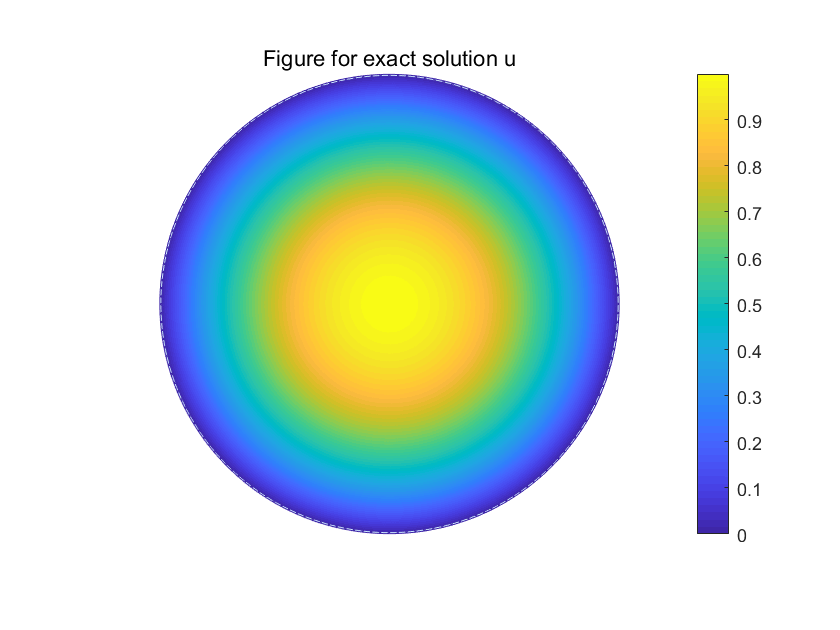}
\includegraphics[height=0.355\textwidth,width=0.42\textwidth]{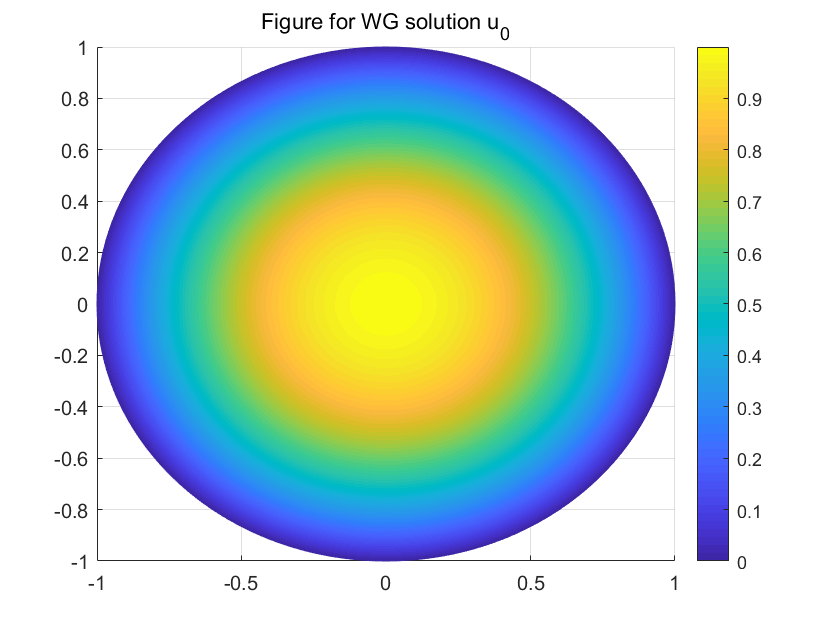}
\caption{$P_2(T)-V_b(\pa T,1)-[P_1(T)]^2$~element on the curved uniform meshes; Left: exact solution; Right: WG solution.}
\label{Pictu-example1-2}
\end{figure}

\begin{table}[h]
\caption{Test case 2: Numerical errors and corresponding convergence rates for $P_2(T)-V_b(\pa T,1)-[P_1(T)]^2$ element on the curved uniform meshes.}\label{NE:exampe-2}
\begin{tabular}{|p{0.9cm}p{1.28cm}p{0.8cm}p{1.28cm}p{0.8cm}p{1.28cm}p{0.8cm}p{1.28cm}p{0.6cm}|}
\hline
h&$\3bare_h\3bar$&Rate&$\|e_0\|$&Rate&$\|e_b\|_{\E_h}$&Rate&$\|\nabla e_0\|$&Rate\\
 \hline
0.3           &2.96e-04  &-       &3.47e-05  &-       &5.79e-05  &-       &2.40e-04  &-\\
0.15          &8.56e-05  &1.79    &4.43e-06  &2.97    &9.94e-06  &2.54    &6.27e-05  &1.94\\
0.075         &1.84e-05  &2.22    &4.97e-07  &3.16    &1.15e-06  &3.11    &1.33e-05  &2.23\\
0.0375        &4.50e-06  &2.03    &6.03e-08  &3.04    &1.46e-07  &2.97    &3.19e-06  &2.06\\
0.01875       &1.13e-06  &1.99    &7.20e-09  &3.07    &1.75e-08  &3.06    &6.99e-07  &2.19 \\
\hline
\end{tabular}
\end{table}

{\bf Test case 3 (circular disk)}
The configuration of the test is as follows: the domain is a circular disk defined by $\O=\{\{x,y\}:0.16\leq x^2+y^2\leq1\}$; the exact solution is given by $u=-(x^2+y^2-1)(x^2+y^2-0.16)$; the curved  WG element $P_2(T)-V_b(\pa T,1)-[P_1(T)]^2$ is used; and the curved uniform meshes on levels 1 \& 2 are shown in Figure \ref{Pictu-example1-10}. The plots of the exact solution $u$ and WG numerical approximation $u_0$ are demonstrated in Figure \ref{Pictu-example1-3}. We have observed from Table \ref{NE:exampe-3} that the error of WG solution in different norms on the curved uniform meshes achieves an optimal order of convergence. All numerical results are greatly consist with the theory established in this paper. 

\begin{figure}[!htbp]
\centering
\includegraphics[height=0.36\textwidth,width=0.36\textwidth]{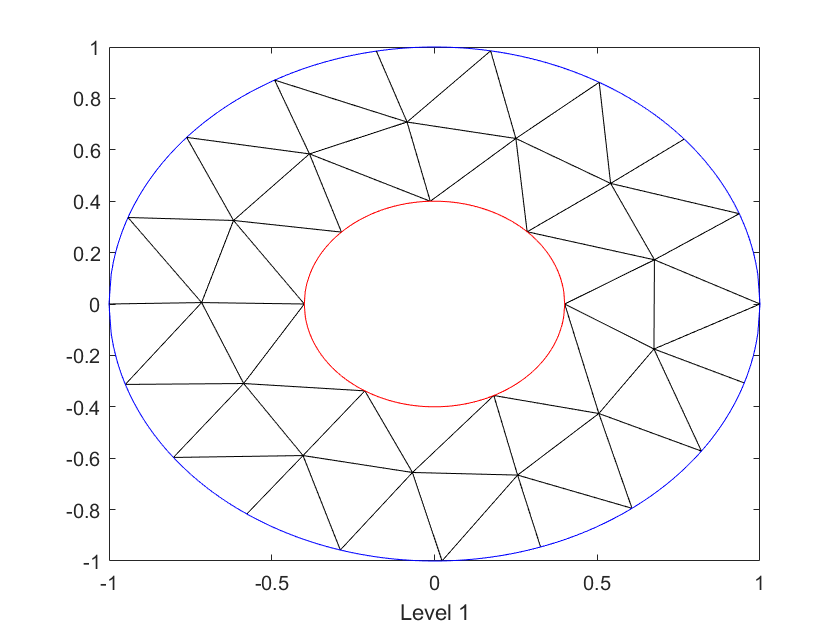}
\includegraphics[height=0.36\textwidth,width=0.36\textwidth]{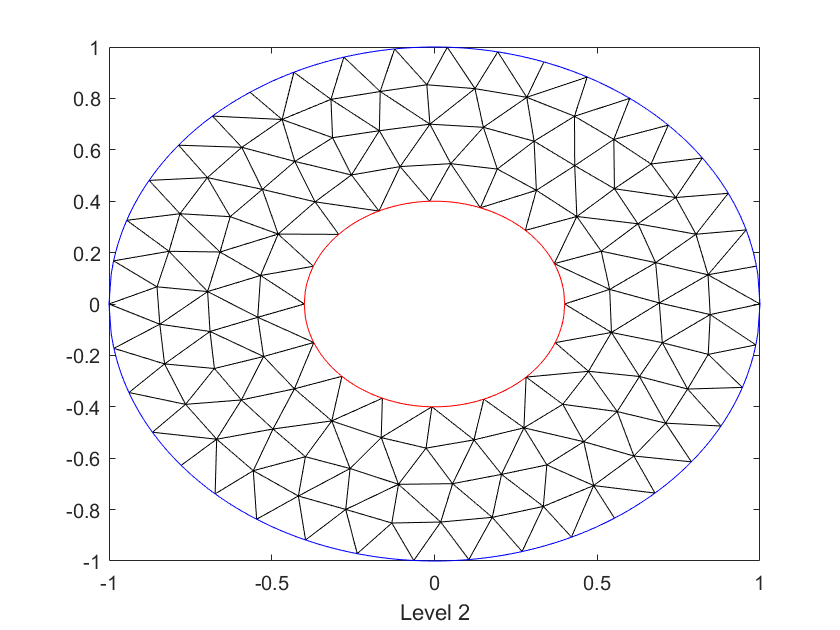}
\caption{Level 1 (Left) and level 2 (Right) of curved uniform meshes in test case 3.}\label{Pictu-example1-10}
\end{figure}

\begin{figure}[!htbp]
\centering
\includegraphics[height=0.37\textwidth,width=0.49\textwidth]{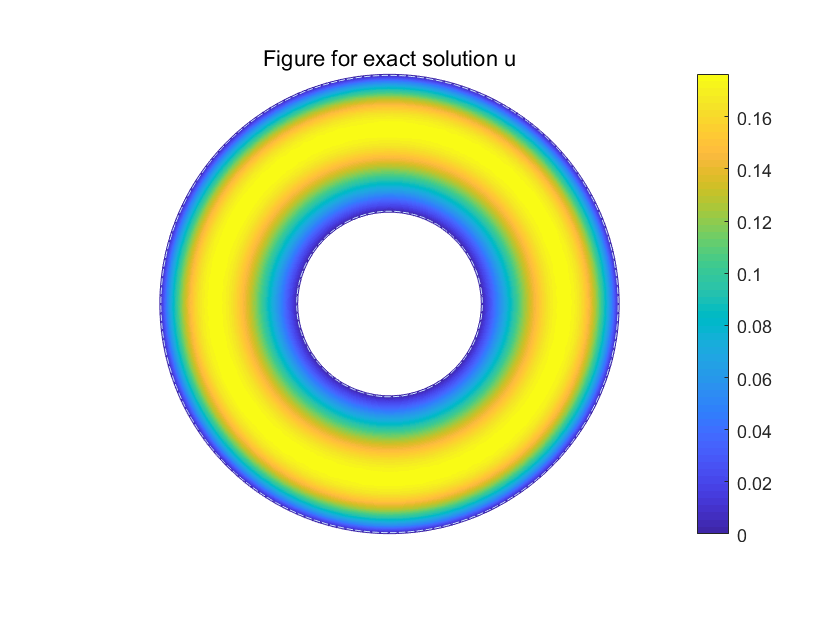}
\includegraphics[height=0.33\textwidth,width=0.39\textwidth]{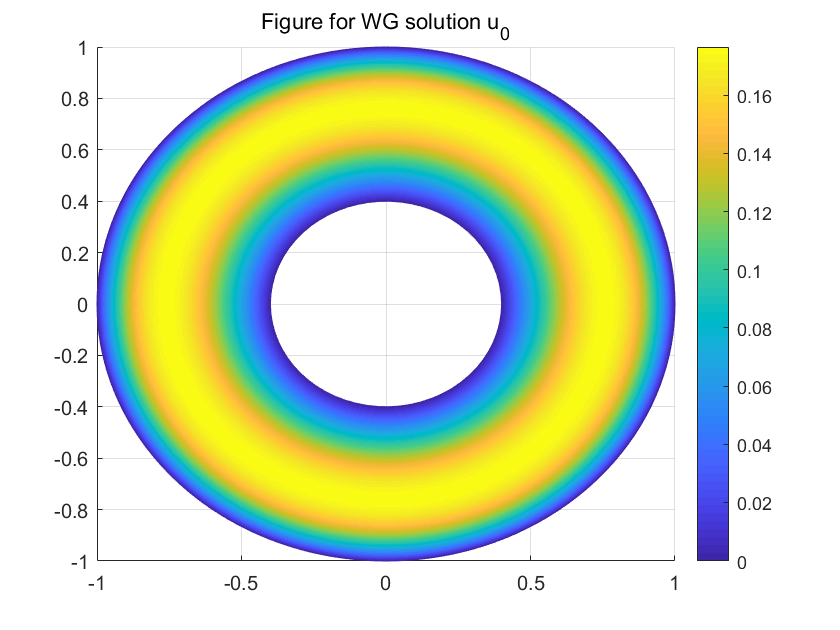}
\caption{$P_2(T)-V_b(\pa T,1)-[P_1(T)]^2$ element on the curved uniform meshes; Left: exact solution; Right: WG solution.}
\label{Pictu-example1-3}
\end{figure}

\begin{table}[h]
\caption{Test case 3: Numerical errors and corresponding convergence rates for $P_2(T)-V_b(\pa T,1)-[P_1(T)]^2$ element on the curved uniform meshes.}\label{NE:exampe-3}
\begin{tabular}{|p{0.9cm}p{1.28cm}p{0.8cm}p{1.28cm}p{0.8cm}p{1.28cm}p{0.8cm}p{1.28cm}p{0.6cm}|}
\hline
h&$\3bare_h\3bar$&Rate&$\|e_0\|$&Rate&$\|e_b\|_{\E_h}$&Rate&$\|\nabla e_0\|$&Rate\\
\hline
0.3           &8.43e-01  &-       &5.21e-02  &-       &3.80e-02  &-       &8.33e-01  &-\\
0.15          &2.01e-01  &2.07    &6.02e-03  &3.11    &4.15e-03  &3.19    &1.99e-01  &2.06\\
0.075         &5.00e-02  &2.01    &6.74e-04  &3.16    &4.48e-04  &3.21    &4.96e-02  &2.01\\
0.0375        &1.22e-02  &2.04    &7.79e-05  &3.11    &5.07e-05  &3.14    &1.21e-02  &2.04\\
0.01875       &3.02e-03  &2.01    &9.38e-06  &3.05    &6.04e-06  &3.07    &3.00e-03  &2.01 \\
\hline
\end{tabular}
\end{table}

\bigskip
\bigskip

\vfill\eject

\newpage

\end{document}